\numberwithin{equation}{section}
\newtheorem{theorem}[equation]{Theorem}
\newtheorem{corollary}[equation]{Corollary}
\newtheorem{lemma}[equation]{Lemma}
\newtheorem{proposition}[equation]{Proposition}
\newtheorem*{theorem*}{Theorem}
\theoremstyle{remark}
\newtheorem{remark}[equation]{\bf Remark}
\theoremstyle{remark}
\theoremstyle{remark}
\newtheorem{definition}[equation]{\bf Definition}
\newcommand{\N}{\mathbb{N}}
\newcommand{\Q}{\mathbb{Q}}
\newcommand{\tens}[1]{%
  \mathbin{\mathop{\otimes}\displaylimits_{#1}}%
}
\newcommand{\Ufd}{\mathrm{UFD}}
\newcommand{\Rk}{\mathrm{rank}}
\newcommand{\f }{\mathrm{frac}}
\newcommand{\s}{\mathrm{Spec\  }}
\newcommand{\D}{\mathrm{dim}}
\newcommand{\Exp}{\mathrm{EXP}}
\newcommand{\Tr}{\mathrm{tr.deg}}
\newcommand{\mf}[1]{\mathfrak{#1}}
\newcommand{\ms}[1]{\mathscr{#1}}
\renewcommand{\o}[1]{\overline{#1}}
\title{Rigidity and triangularity of an exponential map }
\author[Sai Krishna]{P M S Sai Krishna} 
\address{\noindent P M S Sai Krishna, Department of Mathematics,  IIT Bombay,  Powai,  Mumbai 400076,  India} 
\email{\href{mailto:saikrishna183@gmail.com}{saikrishna183@gmail.com},  \hspace{3mm}\href{mailto:204099001@iitb.ac.in}{204099001@iitb.ac.in}}
\begin{document}
\date{}
\begin{abstract}
Let $k$ be a field of arbitrary characteristic, $A$ be a domain, and $K=\f (A)$. Then \begin{enumerate}
    \item All exponential maps of $k^{[3]}$ are rigid, and we give a necessary and sufficient condition for the triangularity of $\delta \in \Exp (k^{[3]})$. 

    \item If $\delta \in \Exp (A^{[3]})$ such that $\Rk (\delta)=\Rk (\delta_K)$, then $\delta$ is rigid and we give a necessary and sufficient condition for the triangularity of $\delta$.
\end{enumerate}
When $k$ is  of zero characteristic, $(1)$ is due to \cite{DD} and $(2)$ is due to \cite{KL}. 
\end{abstract}
 
\maketitle
\subjclass  2020 Mathematics Subject Classification:  {13A50, 13B25, 13N15, 14R20}

\keywords {Keywords:}~  {polynomial ring,  exponential map, triangular exponential map, rank, rigidity}

\section{Introduction}
\emph{Throughout the paper, all rings are commutative with unity, $k$ is a field of arbitrary characteristic, $A \subset B$ are domains, and $K=\f (A)$. $A^{[n]}$ denotes the polynomial ring in $n$ variables over $A$ and $\Exp _A(B)$ denotes the set of all exponential maps on $B$. For $B=A^{[n]}$ we define a \emph{coordinate system} of $B$ over $A$ as a set of elements $\{X_1,\dots, X_n\}$ of $B$ such that $B=A[X_1,\dots,X_n]$ and $\Gamma(B)$ denotes the set of all coordinate systems of $B$ over $A$. Let $B=A^{[n]}$ and $\delta \in \Exp (B)$ with $\Rk (\delta)=r$. We denote by $\Gamma_{\delta}(B)$ the set of $\{X_1,\dots,X_n\} \in \Gamma(B)$ such that $A[X_1,\dots,X_{n-r}]\subset B^{\delta}$.}

Exponential maps are a generalization of locally nilpotent derivations. When $k$ is of zero characteristic, exponential maps are equivalent to locally nilpotent derivations. The notion of rank, rigidity, and triangularity of a locally nilpotent derivation of a polynomial ring was introduced in \cite{GF} and \cite{DD}, respectively. We define the same notions similarly for exponential maps of polynomial rings. 

In \cite{DD}, Daigle proved that all locally nilpotent derivations of $k^{[3]}$ are rigid and gave a necessary and sufficient condition for the triangularity of a locally nilpotent derivation. Under some additional hypothesis, the results in \cite{DD} were generalized in \cite{KL} to locally nilpotent derivations of $A^{[3]}$.  This paper will prove these results for exponential maps, which are stated below.
\begin{enumerate}
    \item \textbf{Theorem }(\ref{k3}). All exponential maps of $k^{[3]}$ are rigid. 

    \item \textbf{Theorem}(\ref{Dk2D}). Let $\delta \in \Exp _A(A^{[3]})$. Suppose $\Rk (\delta)=\Rk (\delta_K)$,where $K=\f (A)$ and $\delta_K\in \Exp (K^{[n]})$ is defined as in $(3)$ of (\ref{first principles}). Then $\delta$ is rigid. 

    \item \textbf{Theorem}(\ref{tri2}). Let $\delta \in \Exp (k^{[3]})$. Let $\{X,Y,Z\}\in \Gamma_{\delta}(k^{[3]})$. Then $\delta$ is triangular if and only if  $\delta$ is triangular over $k[X]$.

    \item \textbf{Theorem}(\ref{ATri}). Let $B=A^{[3]}$ and $\delta \in \Exp _A(B)$. Let $\{X,Y,Z\} \in \Gamma(B)$ such that $X\in B^{\delta}$. Suppose $\Rk (\delta)=\Rk (\delta_K)=2$, where $K=\f (A)$ and $\delta_K\in \Exp (K^{[n]})$ is defined as in $(3)$ of (\ref{first principles}) Then $\delta$ is triangular over $A$ if and only if $\delta$ is triangular over $A[X]$. 
\end{enumerate}

\section{Preliminaries}

We recall some definitions.
\begin{definition}
 Let $\delta: B\longrightarrow B^{[1]}$ be an $A$-algebra homomorphism. We denote it by $\delta_t:B\longrightarrow B[t]=B^{[1]}$ if we want to emphasize the indeterminate. We call $\delta$ an \emph{exponential map} on $B$ if
    \begin{enumerate}
        \item $\epsilon_0\delta_t$ is identity on $B$, where $\epsilon_0:B[t]\longrightarrow B$ is the evaluation map at $t=0.$

        \item $\delta_s\circ \delta_t=\delta_{s+t}$,  where $\delta_s$ is extended to a homomorphism $B[t]\longrightarrow B[s, t]$ by defining $\delta_s(t)=t$.
    \end{enumerate}
We denote the set of all exponential maps on $B$ by $\Exp _A(B)$. The set $B^{\delta}=\{x\in B| \delta(x)=x\}$ is called the \emph{ring of $\delta$-invariants} of $B$. We call $\delta \in \Exp _A(B)$ \emph{non-trivial} if $B^{\delta}\neq B$.
\end{definition}

\begin{remark}
Let $\delta \in \Exp _A(B)$ be such that  $\delta :B\longrightarrow B[t]=B^{[1]}$. We can express $\delta$ as follows $$\delta(x)=D_0(x)+D_1(x)t+D_2(x)t^2+\cdots+D_m(x)t^m $$ where $D_0, D_1, D_2, \dots $ are iterative derivatives associated with $\delta$ and $m\in \N$.  Note that $B^{\delta}=\bigcap _{i\geq 1}\mathrm{Ker}(D_i)$, $D_0=id$ and $D_i$ is $A$-linear for all $i$.
    
\end{remark}

\begin{definition}
Let $B$ $\delta \in \Exp _A(B)$ such that  $\delta :B\longrightarrow B[t]=B^{[1]}$.
\begin{enumerate}
    \item  Any $x\in B$ such that $\delta (x)$ has minimal positive $t$-degree is called a \emph{local slice} of $\delta$.

    \item  An element $x\in B$  is called a \emph{slice} if $x$ is a local slice of $\delta$ and $D_n(x)$ is a unit,  where $n= \deg_t(\delta(x))$.
    
    \item A subring $R$ of $B$ is said to be \emph{factorially closed} in $B$ if for any non-zero $x, y \in B$ such that $xy\in R$,  then $x, y \in R$. Note that if $R$ is factorially closed in $B$, then $R$ is algebraically closed in $B$.

    \item $A$ is an $\mathrm{HCF}$-ring (highest common factor ring) if the intersection of two principal ideals is again a principal ideal.  

    \item A $k$-domain $B$ is called \emph{geometrically factorial} if $B\tens{k} L$ is a $\Ufd$, where $L$ is any algebraic extension of $k$.   

    \item Let $B=A^{[2]}=A[X,Y]$. An element $W$ of $A[X,Y]$ is called a \emph{residual variable} if for every prime ideal $\mf{p}$ of $A$, $$\kappa(\mf{p})\tens{A}A[X,Y]=(\kappa(\mf{p})[\o{W}])^{[1]}$$  where $\kappa(\mf{p})=A_{\mf{p}}/\mf{p}A_{\mf{p}}=\f (A/\mf{p})$ and $\o{W}$ is the image of $W$ in $\kappa(\mf{p})\tens{A}A[X,Y]$.

    \item Let $B=A^{[n]}$. The \emph{rank} of $\delta$ is the least integer $r\geq 0$ for which there exists $\{X_1,\dots,X_n\} \in \Gamma(B)$ such that $A[X_1,\dots,X_{n-r}]\subset  B^{\delta}$. We denote it by $\Rk (\delta)$.

    \item Let $B=A^{[n]}$. We say $\delta$ is \emph{triangular} over $A$ if there exists a coordinate system $\{X_1,\dots,X_n\}$ such that 
    \begin{enumerate}
        \item $\delta(X_1)-X_1 \in tA[t]$.

        \item  $\delta(X_i)-X_i \in tA[X_1,\dots,X_{i-1}][t]$ for all $i\geq 2$.

        \item If $i$ is the smallest such that $\delta(X_i)\neq X_i$, then $X_i$ is a local slice of $\delta$.
    \end{enumerate}

    \item Let $B=A^{[n]}$ and $\Rk (\delta)=r$. Then $\delta$ is called \emph{rigid} if $A[X_1,\dots,X_{n-r}]=A[X_1',\dots,X_{n-r}']$, whenever $\{X_1,\dots,X_n\}, \{X_1',\dots,X_n'\} \in \Gamma_{\delta}(B)$. 

    \item  For $\alpha \in B^{\delta}$, define $\alpha \delta \in \Exp (B)$ as follows. $$\alpha \delta(x)=x+\alpha D_1(x)t+\alpha^2D_2(x)t^2+\cdots +\alpha^mD_m(x)t^m.$$. We note that $x$ is local slice of $\delta$ if and only if $x$ is a local slice of $\alpha \delta$ and when $B=A^{[n]}$, we have $\Rk(\delta)=\Rk(\alpha \delta)$.
\end{enumerate}
\end{definition}

We summarise below some useful properties of exponential maps \cite[Lemma $2.2$]{KW} and \cite[Lemma $2.1,  2.2$]{CM}.

\begin{proposition}\label{first principles}
Let $\delta \in \Exp _A(B)$ be non-trivial and $x$ be a local slice with $m=\deg_t\delta(x)$. Let $c=D_n(m)$.
\begin{enumerate}
    \item $D_i(x)\in B^{\delta}$ for all $i>0.$ 
    
    \item  $B^{\delta}$ is factorially closed in $B$ hence algebraically closed.

    \item Let $B=A^{[n]}$ and $S\subset B^{\delta}\setminus\{0\}$ be a multiplicative closed subset. Then $\delta$ extends to a non-trivial exponential map $S^{-1}\delta$ on $S^{-1}B$ defined as $S^{-1}\delta (\frac{b}{s})=\frac{\delta(b)}{s}$ for all $b\in B$ and $s\in S$. Moreover, $(S^{-1}B)^{S^{-1}\delta}=S^{-1}(B^{\delta})$. In particular with $S=A\setminus \{0\}$, $S^{-1}B=K^{[n]}$ and we denote $S^{-1}\delta$ by $\delta_K \in \Exp (K^{[n]})$. Note that if $x$ is a local slice of $\delta$, then $\frac{x}{1}$ is a local slice of $S^{-1}\delta$. 

    \item If $A=k$ is a field, then $B[c^{-1}]=B^{\delta}[c^{-1}][x]$ and   $\Tr _{B^{\delta}}B=1$.
\end{enumerate}
\end{proposition}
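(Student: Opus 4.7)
The plan is to establish each part using the composition law $\delta_s\circ \delta_t = \delta_{s+t}$ together with the minimality condition built into the notion of a local slice.

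For (1), I would apply the composition law to an arbitrary $y\in B$ and expand both sides in $B[s,t]$. The left side yields $\sum_{i,j} D_j(D_i(y))\,s^j t^i$ and the right side, after expanding $(s+t)^k$ by the binomial theorem, yields $\sum_{i,j}\binom{i+j}{j} D_{i+j}(y)\,s^j t^i$. Comparing coefficients gives the iterative identity $D_j\circ D_i = \binom{i+j}{j}\, D_{i+j}$ on all of $B$. Specialising to $y=x$ and using $D_k(x)=0$ for $k>m$, the polynomial $\delta(D_i(x))$ has $t$-degree at most $m-i$, which is strictly less than $m$ for $i\ge 1$. Since $m$ is the minimal positive $t$-degree attainable by $\delta$ on non-invariants, this forces $\delta(D_i(x))=D_i(x)$, i.e.\ $D_i(x)\in B^\delta$.

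For (2), if $xy\in B^\delta$ with $x,y\ne 0$, applying $\delta$ as a ring homomorphism yields $\delta(x)\delta(y)=\delta(xy)=xy$; since $B[t]$ is a domain the $t$-degrees add, forcing both to be zero, hence $x,y\in B^\delta$, and algebraic closure then follows from factorial closure via the constant term of a minimal polynomial. For (3), any witness $s(s_2b_1-s_1b_2)=0$ for $b_1/s_1=b_2/s_2$ lies in $S\subset B^\delta$, so applying $\delta$ preserves it, making $S^{-1}\delta$ well-defined; the exponential axioms descend directly. For the invariant ring, if $\delta(b)/s=b/s$ in $(S^{-1}B)[t]$, some $s'\in S$ satisfies $s'\delta(b)=s'b$ in $B[t]$, and since $s'\in B^\delta$ we get $\delta(s'b)=s'\delta(b)=s'b$, so $s'b\in B^\delta$ and $b/s=s'b/(ss')\in S^{-1}(B^\delta)$.

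For (4), assume $A=k$ is a field and invert $c=D_m(x)$; by (3), the invariants of the extended map on $B[c^{-1}]$ are $B^\delta[c^{-1}]$, which is factorially closed in $B[c^{-1}]$ by (2). To establish $B[c^{-1}]=B^\delta[c^{-1}][x]$, I would induct on $\deg_t\delta(y)$ for $y\in B[c^{-1}]$: the iterative identity shows the leading coefficient $D_n(y)$ is invariant, and an invariant-coefficient combination of powers of $x$ with matching leading behaviour cancels it, lowering the $t$-degree. Transcendence of $x$ over $B^\delta[c^{-1}]$ follows from $\delta(x)\ne x$ together with the factorial closure, yielding $\Tr_{B^\delta}B=1$. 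The principal obstacle is this inductive step: in positive characteristic the standard exponential-of-derivation argument is unavailable, since binomial coefficients in the iterative identity can vanish, so the leading-term cancellation must be executed directly from the iterative structure, which is precisely the content of the lemmas cited from \cite{KW} and \cite{CM}.
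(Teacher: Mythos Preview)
The paper does not actually prove this proposition; it merely records the statement with the attribution ``We summarise below some useful properties of exponential maps \cite[Lemma~2.2]{KW} and \cite[Lemma~2.1, 2.2]{CM}.'' Your proposal therefore already goes beyond what the paper does.

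Your arguments for (1)--(3) are correct and are the standard ones; in particular, the derivation of the iterative identity $D_j\circ D_i=\binom{i+j}{j}D_{i+j}$ from $\delta_s\circ\delta_t=\delta_{s+t}$ and the use of minimality of $m$ in (1) are exactly right. For (4), the inductive outline is the correct shape, but note that the step ``an invariant-coefficient combination of powers of $x$ with matching leading behaviour cancels it'' tacitly assumes that $\deg_t\delta(y)$ is a multiple of $m$: any element of $B^{\delta}[c^{-1}][x]$ has $t$-degree lying in $m\mathbb{Z}$, so without this divisibility you cannot match the leading term. Establishing that divisibility (equivalently, that the degree function $\deg_t\circ\,\delta$ takes values in $m\mathbb{Z}_{\ge 0}$) is precisely the positive-characteristic subtlety you flag at the end, and it is what the cited lemmas supply. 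Since you explicitly defer that point to the same references the paper invokes, your proposal is entirely consistent with the paper's treatment.
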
 

The following result is a special case of $(4)$ of (\ref{first principles}). We record it separately for later use. 
\begin{corollary}[\emph{Slice Theorem}]\label{slice theorem}
Let $\delta$ be a non-trivial exponential map on a $k$-domain $B$ and $x$ be a slice. Then  $B=B^{\delta}[x]=(B^{\delta})^{[1]}$.
\end{corollary}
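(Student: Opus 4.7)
The plan is to deduce the Slice Theorem directly from part (4) of Proposition~\ref{first principles}, treating it as a specialization to the case where the leading coefficient of $\delta(x)$ happens to be invertible. No new machinery should be required.

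First I would unpack what ``$x$ is a slice'' buys us. Writing $n=\deg_t\delta(x)$, the element $c:=D_n(x)$ is a unit of $B$ by definition of a slice, and $c\in B^{\delta}$ by part (1) of Proposition~\ref{first principles}. A one-line check (apply $\delta$ to the identity $c\cdot c^{-1}=1$ and use $\delta(c)=c$) shows $\delta(c^{-1})=c^{-1}$, so $c^{-1}\in B^{\delta}$ as well. Consequently, inverting $c$ does nothing: $B[c^{-1}]=B$ and $B^{\delta}[c^{-1}]=B^{\delta}$.

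Next I would invoke part (4) of Proposition~\ref{first principles}, which gives
\[
B[c^{-1}] \;=\; B^{\delta}[c^{-1}][x].
\]
Combined with the previous step this collapses to $B=B^{\delta}[x]$, establishing the first equality in the statement.

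Finally, to upgrade $B=B^{\delta}[x]$ to $B=(B^{\delta})^{[1]}$, I would appeal to the transcendence-degree assertion of the same part (4), namely $\Tr_{B^{\delta}}B=1$. Since $B$ is already generated over $B^{\delta}$ by the single element $x$, this forces $x$ to be transcendental over $B^{\delta}$, so $B=B^{\delta}[x]\cong (B^{\delta})^{[1]}$. There is no real obstacle here: the theorem is, as the text already hints, just the special case of Proposition~\ref{first principles}(4) where the localization is trivial, and the only content beyond citing (4) is the short verification that a unit in $B^{\delta}$ has its inverse in $B^{\delta}$.
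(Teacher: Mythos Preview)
Your proof is correct and follows exactly the approach the paper intends: the corollary is stated without proof in the paper, with the remark that it is the special case of Proposition~\ref{first principles}(4) in which $c$ is a unit, and you have simply spelled out that specialization together with the easy check that $c^{-1}\in B^{\delta}$ and that $\Tr_{B^{\delta}}B=1$ forces $x$ to be transcendental.
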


We require the following results related to exponential maps from \cite[Corollary $1.2$]{KW} and \cite[Corollary $3.3$]{SK}, respectively. 

\begin{proposition}\label{tr2}
    Let $A$ be an $\mathrm{HCF}$-ring, $B=A^{[2]}$ and $\delta \in \Exp _A(B)$ is non-trivial. Then there exists $h\in B\setminus A$ such that $B^{\delta}=A[h]=A^{[1]}$.
\end{proposition}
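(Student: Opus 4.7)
The plan is to extend $\delta$ to an exponential map $\delta_K$ on $K^{[2]}$, apply a structure theorem for invariants of exponential maps on a polynomial ring in two variables over a field to get $(K^{[2]})^{\delta_K} = K[f]$, and then descend to $B^\delta$ using the HCF property of $A$.

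First, by Proposition~\ref{first principles}(3), $\delta$ extends to a non-trivial $\delta_K \in \Exp(K^{[2]})$ with $(K^{[2]})^{\delta_K} = S^{-1}(B^\delta)$, where $S = A \setminus \{0\}$. By Proposition~\ref{first principles}(4), $(K^{[2]})^{\delta_K}$ is factorially closed in $K[X,Y]$ and has transcendence degree $1$ over $K$. Choosing a local slice $x$ of $\delta_K$ with leading iterative derivative coefficient $c \in (K^{[2]})^{\delta_K}$, Proposition~\ref{first principles}(4) gives $K[X,Y][c^{-1}] = (K^{[2]})^{\delta_K}[c^{-1}][x]$. Combined with the UFD property of $K[X,Y]$ and factorial closure of the invariant ring, a descent argument extracts an $f \in K[X,Y]$ with $(K^{[2]})^{\delta_K} = K[f]$.

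Next, since $A$ is an HCF-ring, I can rescale $f$ by an element of $K\setminus\{0\}$ to obtain a primitive element $h \in B$, meaning the $A$-content of $h$ is a unit, with $K[h] = K[f]$. Then $h \in (K^{[2]})^{\delta_K} \cap B = B^\delta$, giving $A[h] \subseteq B^\delta$. For the reverse inclusion, any $g \in B^\delta \subseteq K[h]$ has a unique expansion $g = \sum_i c_i h^i$ with $c_i \in K$; writing the $c_i$ over a common denominator and applying Gauss's lemma (valid in $A$ since HCF and GCD are equivalent) to the primitive element $h$ forces each $c_i \in A$, so $g \in A[h]$. Hence $B^\delta = A[h]$, and this equals $A^{[1]}$ since $h$ is transcendental over $A$ (as a non-constant polynomial in the transcendental $f$ over $K$), while $h \notin A$ because $\delta$ is non-trivial.

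The main obstacle is the field-case structure theorem, namely $(K^{[2]})^{\delta_K} = K^{[1]}$ for any non-trivial exponential map on $K[X,Y]$. In characteristic zero this reduces to Rentschler's theorem for locally nilpotent derivations, but in arbitrary characteristic one must argue directly using the slice theorem after inverting the leading iterative derivative coefficient, together with factorial closure and the UFD property of $K[X,Y]$, to produce a single generator $f$ for the invariant ring. The subsequent descent from $K[f]$ to $A[h]$ is more routine but depends crucially on the HCF-equals-GCD equivalence to invoke Gauss's lemma.
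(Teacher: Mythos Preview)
The paper does not prove this proposition; it is quoted from \cite[Corollary~1.2]{KW}. So there is no ``paper's proof'' to match, but your argument should still be checked on its own merits, and the descent step has a genuine gap.

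Your claim that rescaling $f$ to a primitive $h\in B$ and invoking Gauss's lemma forces $K[h]\cap B=A[h]$ is false. Gauss's lemma controls contents of \emph{products} of polynomials, not of \emph{compositions} $P(h)$; primitivity of $h$ does not imply primitivity of $P(h)$ for primitive $P\in A[T]$. Concretely, take $A=\Z$, $B=\Z[X,Y]$, and $\delta$ defined by $\delta(X)=X$, $\delta(Y)=Y+t$, so $B^{\delta}=\Z[X]$ and $(K^{[2]})^{\delta_K}=\Q[X]$. The element $f=2X+1$ is a perfectly good generator of $\Q[X]$ over $\Q$ and is already primitive in $\Z[X,Y]$, so your recipe allows $h=2X+1$. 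But then $A[h]=\Z[2X+1]\subsetneq\Z[X]=B^{\delta}$: the element $g=X\in B^{\delta}$ has expansion $g=\tfrac{1}{2}h-\tfrac{1}{2}$ with $c_0,c_1\notin\Z$. The problem is that generators of $K[f]$ are only determined up to $f\mapsto\alpha f+\beta$ with $\alpha\in K^{*}$, $\beta\in K$, and ``primitive rescaling'' does not pin down the correct $h$.

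The clean route, and almost certainly what lies behind the cited result, bypasses the field case and the descent simultaneously: apply Proposition~\ref{tr1} directly. One has $A\subset B^{\delta}\subset B=A^{[2]}$, the ring $B^{\delta}$ is factorially closed in $B$ by Proposition~\ref{first principles}(2), and $\Tr_{A}B^{\delta}=1$ since localizing at $S=A\setminus\{0\}$ gives $S^{-1}B^{\delta}=(K^{[2]})^{\delta_K}$, which has transcendence degree~$1$ over $K$ by Proposition~\ref{first principles}(4). Then Proposition~\ref{tr1} yields $B^{\delta}=A^{[1]}$ immediately. This also dissolves the ``main obstacle'' you flag in the field case, since \ref{tr1} applies with $A$ replaced by any HCF-ring, in particular a field.
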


\begin{corollary}\label{gufd slice}
      Let $B$ be a $\Ufd$ over a field $k$ with $\Tr _kB=2$ and $\delta \in \Exp (B)$ is non-trivial. Suppose $B$ is geometrically factorial. Then $B=(B^{\delta})^{[1]}.$
\end{corollary}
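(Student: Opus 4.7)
The plan is to produce a slice of $\delta$; the Slice Theorem (\ref{slice theorem}) would then deliver $B = B^{\delta}[x] = (B^{\delta})^{[1]}$. I would fix a local slice $x$ of $\delta$, set $m = \deg_t \delta(x)$, and let $c := D_m(x) \in B^{\delta}$. By Proposition \ref{first principles}(4) (applied with $A = k$), $B[c^{-1}] = B^{\delta}[c^{-1}][x]$, so $x$ is transcendental over $B^{\delta}$ and $\Tr_{B^{\delta}}B = 1$. The whole task therefore reduces to showing $c$ is a unit in $B$. Since $B^{\delta}$ is factorially closed in the UFD $B$ (Proposition \ref{first principles}(2)), $B^{\delta}$ is itself a UFD, and any irreducible of $B^{\delta}$ remains prime in $B$. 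Suppose, towards a contradiction, that $c$ is not a unit, and fix a prime divisor $p$ of $c$ in $B^{\delta}$.

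The geometric factoriality hypothesis is invoked by base-changing to the algebraic closure: set $\bar{B} := B \otimes_k \bar{k}$, a UFD by hypothesis, and $\bar{\delta} := \delta \otimes_k \mathrm{id}$. Flatness of $\bar{k}/k$ yields $\bar{B}^{\bar{\delta}} = B^{\delta} \otimes_k \bar{k}$, and since $B^{\delta} \subsetneq B$ the map $\bar{\delta}$ is non-trivial; the element $x$ remains a local slice of $\bar{\delta}$ with leading coefficient $c \otimes 1$, still divisible by the non-unit $p \otimes 1$. Over the algebraically closed $\bar{k}$, the aim is to prove the UFD analogue of Proposition \ref{tr2}: a non-trivial exponential map on a UFD of transcendence degree $2$ over an algebraically closed field admits a slice, which would force $c \otimes 1$ to be a unit. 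A natural sub-strategy is to analyse the exponential map induced on the domain $\bar{B}/(p \otimes 1)\bar{B}$: if this induced map is non-trivial, Proposition \ref{first principles}(4) applied to the transcendence-degree-$1$ quotient, together with the UFD structure of $\bar{B}$, should yield a contradiction with $p \otimes 1$ being a non-unit invariant; if it is trivial, one would seek to replace $x$ by a local slice whose leading coefficient has strictly lower $p$-multiplicity, contradicting the minimality of $m$.

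Once $c \otimes 1$ is established as a unit of $\bar{B}^{\bar{\delta}} = B^{\delta} \otimes_k \bar{k}$, faithful flatness of the extension $k \subset \bar{k}$ forces $c$ itself to be a unit of $B^{\delta}$, contradicting $p \mid c$ and delivering the required slice. The principal obstacle will be the intermediate structural statement over $\bar{k}$, in particular the trivial-quotient case: in characteristic zero one can handle it by a clean divide-out-invariant trick on the underlying locally nilpotent derivation, but for general exponential maps in positive characteristic the iterative derivatives $D_i$ are not powers of a single operator, so fabricating a lower-$t$-degree local slice from the divisibility of every $D_i(x)$ by $p \otimes 1$ requires a more delicate, characteristic-insensitive combinatorial argument on the iterative structure.
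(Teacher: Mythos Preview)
The paper does not supply its own proof of this corollary: it is imported verbatim from \cite[Corollary~3.3]{SK} (see the sentence immediately preceding Proposition~\ref{tr2} and Corollary~\ref{gufd slice}). There is therefore no in-text argument against which to compare your proposal.

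On the substance of your plan: the reduction to showing that the local-slice leading coefficient $c$ is a unit, the base-change to $\bar{k}$ via geometric factoriality, the flatness computation $\bar{B}^{\bar{\delta}} = B^{\delta}\otimes_k\bar{k}$, and the preservation of $x$ as a local slice after base-change are all correct. However, the step you yourself flag as the principal obstacle---the trivial-quotient case---is a genuine gap, not a mere technicality. In characteristic zero one divides the underlying locally nilpotent derivation by $p$; in Hasse--Schmidt language this amounts to deducing $D_i(b)\in p^{\,i}\bar{B}$ for all $b$ and rescaling $t\mapsto t/p$. In positive characteristic the iterative identity $D_jD_i=\binom{i+j}{i}D_{i+j}$ has vanishing binomial coefficients, so from $D_i(b)\in p\bar{B}$ for all $i\ge 1$ one cannot bootstrap to $D_i(b)\in p^{\,i}\bar{B}$, and no replacement mechanism is offered. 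As written, then, your proposal is an outline that stops precisely at the characteristic-sensitive point the result is meant to cover; to complete it you would need either a separate structural input about $\mb{G}_a$-actions on factorial affine surfaces over algebraically closed fields (which is closer to how such statements are typically proved), or an argument that the induced exponential map on $\bar{B}/p\bar{B}$ is never trivial when $p$ is a prime of the invariant ring.
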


We recall the following results from \cite[Proposition $4.8$ and $1.7$]{AEH}. 
\begin{proposition}\label{tr1}
    Let $A$ be an $\mathrm{HCF}$-ring and $B=A[X_1,\dots,X_n]=A^{[n]}$. Suppose $R$ is a ring and  $\Tr _AR=1$ such that $A\subset R\subset B.$ If $R$ is factorially closed in $B$, then $R= A^{[1]}$.
\end{proposition}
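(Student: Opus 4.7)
The plan is to reduce to the case where the base is a field by localizing at $A \setminus \{0\}$, invoke the classical Abhyankar--Eakin--Heinzer theorem over the fraction field, and then descend to $A$ using the HCF-property.

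Set $K = \f(A)$. Since $B = A^{[n]}$ is $A$-free, the localization $K \otimes_A R$ injects into $K \otimes_A B = K^{[n]}$. Algebraic closedness is preserved under such localization, so $K \otimes_A R$ is algebraically closed in $K^{[n]}$ and has transcendence degree $\Tr_A R = 1$ over $K$. The field version of the Abhyankar--Eakin--Heinzer theorem then gives $K \otimes_A R = K[h]$ for some $h$, which may be chosen in $R$ after multiplying by a suitable element of $A \setminus \{0\}$.

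For the descent, let $x \in R \subset K[h]$ and write $x = f(h)/a$ with $f \in A[T]$ and $a \in A \setminus \{0\}$. Factor $f = c(f) f_0$ with $f_0$ primitive (Gauss's lemma holds in HCF rings), set $d = \gcd(a, c(f))$, and cancel $d$ to obtain $a' x = c' f_0(h)$ with $\gcd(a', c') = 1$. Since $A$ is factorially closed in the GCD ring $B$, one has $a' B \cap c' B = a' c' B$, forcing $a' \mid f_0(h)$ in $B$. Writing $f_0(h) = a' w$, factorial closedness of $R$ in $B$ puts $w \in R \subset K[h]$, so $w = W(h)$ for some $W \in K[T]$, and transcendence of $h$ over $K$ then yields the polynomial identity $f_0(T) = a' W(T)$ in $K[T]$.

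The main obstacle is concluding from this identity that $a' \in A^{\times}$: this is equivalent to $W \in A[T]$, i.e., $A[h] = K[h] \cap B$, a purity which can fail for an arbitrary transcendental $h \in B$ (e.g.\ $h = 2X+1$ in $A=\mathbb{Z}$, $B = \mathbb{Z}[X]$). The actual role of the HCF-property of $A$ together with the factorial closedness of $R$ in $B$ is to allow one to refine $h$ to a generator of $K \otimes_A R$ lying in $R$ for which this purity does hold; once that refinement is in place, $W \in A[T]$ combined with the primitivity of $f_0$ forces $a' \mid c(f_0) = 1$, so $a' \in A^{\times}$, whence $x \in A[h]$ and $R = A[h] = A^{[1]}$.
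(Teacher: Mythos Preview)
The paper does not supply its own proof of this proposition: it is quoted from the literature (Abhyankar--Eakin--Heinzer, Propositions~4.8 and~1.7) as a known input. So there is no in-paper argument to compare your attempt against.

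As for the proposal itself, it is not a complete proof, and you say as much. The passage to $K=\f(A)$ and the identification $K\otimes_A R=K[h]$ with $h\in R$ are correct and standard (note that what is preserved and what you actually need over $K$ is \emph{factorial} closedness, not merely algebraic closedness; the field case of the cited result uses the former). The content of the proposition, however, lies entirely in the descent, and there your argument stops short. From $f_0(T)=a'W(T)$ in $K[T]$ with $f_0$ primitive over $A$ you want $a'\in A^{\times}$, but your own example $h=2X+1$ over $A=\Z$ shows this can fail for a generator $h$ chosen arbitrarily. Your last paragraph then asserts that the HCF hypothesis together with factorial closedness of $R$ ``allows one to refine $h$'' so that the purity $A[h]=K[h]\cap B$ holds, yet you give no construction of this refined $h$ and no argument that such a choice exists. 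That refinement \emph{is} the proof; everything preceding it is setup. Until you either produce the correct $h$ explicitly (and show $R\subset A[h]$ for that choice) or replace the purity step by a different mechanism, what you have is an outline of a strategy, not a proof.
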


\begin{lemma}\label{unique}
    Let $R,S$ be subrings of a domain $A$ such that $A=R^{[n]}=S^{[n]}$. If $b\in S$ is such that $bS\cap R\neq 0$, then $b\in R$.
\end{lemma}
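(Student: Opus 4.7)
The plan is to derive $b\in R$ directly from the observation that the base ring of a polynomial ring over a domain is factorially closed in it. The hypothesis $bS\cap R\neq 0$ gives some $s\in S$ with $bs\in R$ and $bs\neq 0$. Since $b,s\in S\subset A$, it suffices to show that $R$ is factorially closed in $A=R^{[n]}$: then the factorization $bs$ of a nonzero element of $R$ inside $A$ must have both factors in $R$, so in particular $b\in R$. Note that the hypothesis $A=S^{[n]}$ will play no essential role beyond ensuring $b,s\in A$.

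The one content-ful step is the claim that $R$ is factorially closed in $R^{[n]}$. I would argue this by a standard leading-term computation: fix a coordinate system $\{Y_1,\dots,Y_n\}$ so that $A=R[Y_1,\dots,Y_n]$, and define $\deg_R(x)$ to be the total degree of $x\in A\setminus\{0\}$ in the $Y_i$. Because $R$ is a domain, the product of two nonzero homogeneous-in-$Y$ leading forms is again nonzero, so $\deg_R$ is additive on nonzero products. If $x,y\in A\setminus\{0\}$ satisfy $xy\in R$, then $\deg_R(x)+\deg_R(y)=\deg_R(xy)=0$, which forces $\deg_R(x)=\deg_R(y)=0$, i.e.\ $x,y\in R$. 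Applying this with $x=b$ and $y=s$ yields $b\in R$, as required.

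I do not expect a genuine obstacle: the total-degree argument for factorial closedness is routine for polynomial rings over domains, and once that is recorded the lemma is one line. The only care needed is to keep track of the fact that $R$-degree is defined using a coordinate system of $A$ over $R$ (not over $S$), and that the additivity of $\deg_R$ requires $R$ — hence $A$ — to be a domain, which is given.
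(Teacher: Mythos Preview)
Your argument is correct: $R$ is factorially closed in $A=R^{[n]}$ by the total-degree computation you give (using that $R$, as a subring of the domain $A$, is itself a domain), and then $bs\in R\setminus\{0\}$ with $b,s\in A$ forces $b\in R$. Your observation that the hypothesis $A=S^{[n]}$ is superfluous is also accurate; one only needs $S\subset A$.

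As for comparison: the paper does not supply its own proof of this lemma but merely quotes it from \cite[Proposition~1.7]{AEH}, so there is no in-paper argument to compare against. Your proof is a clean, self-contained justification of the cited fact.
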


We will be using the following results related to variables of polynomial rings from \cite[Theorem $3.2$]{BD} and \cite[$2.4$]{PS}, respectively. 

\begin{proposition}\label{residual}
    Let $A$ be a Noetherian ring such that $A$ contains $\Q$ or $A_{\mathrm{red}}$ is seminormal. An element $W$ of $A[X,Y]= A^{[2]}$ is a variable over $A$ if and only if $W$ is a residual variable of $A[X,Y]$.   
\end{proposition}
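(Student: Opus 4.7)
The forward direction is immediate from the definition of residual variable: if $A[X,Y]=A[W,W']$, then tensoring with $\kappa(\mf{p})$ over $A$ preserves the presentation, giving $\kappa(\mf{p})\tens{A}A[X,Y]=\kappa(\mf{p})[\o{W},\o{W'}]=(\kappa(\mf{p})[\o{W}])^{[1]}$, so $W$ is a residual variable. For the reverse direction, the plan is to first reduce to the local Noetherian case by faithfully flat descent, since whether $W$ is a variable of a polynomial ring over a Noetherian base can be checked locally at each prime. One may thus assume $(A,\mf{m})$ is local Noetherian (still satisfying $A\supset \Q$ or $A_{\mathrm{red}}$ seminormal), use the residual variable hypothesis at $\mf{m}$ to pick $\o{W'}\in \kappa(\mf{m})[X,Y]$ with $\kappa(\mf{m})[X,Y]=\kappa(\mf{m})[\o{W},\o{W'}]$, and lift it to some $W_0'\in A[X,Y]$. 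The substantive task is then to correct $W_0'$ to an element $W'$ with $A[X,Y]=A[W,W']$, or equivalently to show that the faithfully flat $A[W]$-algebra $A[X,Y]$ is isomorphic to $A[W]^{[1]}$.

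This descent step is where the hypothesis on $A$ becomes essential, and it splits into two cases. When $A\supset \Q$, the strategy is to exploit characteristic-zero differential tools: construct a locally nilpotent $A[W]$-derivation on $A[X,Y]$ annihilating $W$ and use its exponential (or an appropriate slice) to rectify $W_0'$ into a genuine variable over $A[W]$; this underlies the approach of \cite{PS}. When only $A_{\mathrm{red}}$ is seminormal, the Bhatwadekar--Dutta method of \cite{BD} instead combines Bass--Quillen patching with the rigidity $\mathrm{Pic}(A)=\mathrm{Pic}(A[T])$ for seminormal rings to glue the fibrewise isomorphisms $\kappa(\mf{p})\tens{A}A[X,Y]\cong \kappa(\mf{p})[\o{W}]^{[1]}$ into a global $A[W]$-algebra isomorphism, from which the complementary variable is extracted.

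The principal obstacle is precisely this global descent, which genuinely requires one of the two hypotheses: Asanuma-type threefolds exhibit residual variables that are not variables in general, so the hypothesis cannot be dispensed with. Once the reduction to the local Noetherian setting is carried out, my proposal is to invoke the cited theorems \cite{BD} and \cite{PS} directly rather than reprove the delicate patching; the point of the sketch is to highlight that the local reduction is formal while the global upgrade from fibrewise to honest polynomial presentation is the nontrivial content, hidden inside those references.
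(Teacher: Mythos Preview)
The paper does not supply a proof of this proposition: it is quoted in the preliminaries directly from \cite[Theorem~3.2]{BD}, with no argument given. Your proposal to ultimately invoke \cite{BD} and \cite{PS} for the nontrivial direction is therefore exactly what the paper does, and the informal sketch you add around that citation is extra context the paper itself does not provide.
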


\begin{theorem}\label{generic}
    If $f\in k[X,Y]=k^{[2]}$ is generically a line, i.e., $k(f)[X,Y]=k(f)^{[1]}$, then $f$ is a variable of $k[X,Y]$.
\end{theorem}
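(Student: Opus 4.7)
I would construct a non-trivial exponential map on $k[X,Y]$ whose ring of invariants is $k[f]$, and then apply the slice theorem for geometrically factorial domains.

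The hypothesis yields $g_0 \in k(f)[X,Y]$ with $k(f)[X,Y] = k(f)[g_0]$. After multiplying $g_0$ by a nonzero element of $k[f]$ and translating by an element of $k[f]$, I may assume $g \in k[X,Y]$ while keeping $k(f)[X,Y] = k(f)[g]$. There is then the natural exponential map $\delta' : k(f)[g] \to k(f)[g][t]$ sending $g \mapsto g+t$, with $k(f)[X,Y]^{\delta'} = k(f)$.

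The delicate step is to descend $\delta'$ to $k[X,Y]$. Write $X = P(g)/a$ and $Y = Q(g)/b$ with $P, Q \in k[f][T]$ and $0 \ne a,b \in k[f]$, set $d = ab \in k[f] \subseteq k(f) = k(f)[X,Y]^{\delta'}$, and form the twist $\delta := d\delta'$ (the $\alpha\delta$ construction in the definition of exponential maps). Using the Hasse expansion $P(g+t) = \sum_i P_i(g)\, t^i$ with $P_i \in k[f][T]$, the $t^i$-coefficient of $\delta(X)$ for $i \ge 1$ equals $d^i P_i(g)/a = a^{i-1} b^i P_i(g) \in k[X,Y]$, and analogously for $Y$. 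Hence $\delta$ restricts to a non-trivial exponential map on $k[X,Y]$ (non-triviality from $\delta(g)=g+dt \ne g$), and $\delta(f) = f$ since $f \in k[f] \subseteq k(f) = k(f)[X,Y]^{\delta'}$.

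Set $R = k[X,Y]^\delta$. Proposition \ref{first principles}(2),(4) shows $R$ is factorially closed in $k[X,Y]$ with $\Tr_k R = 1$, so Proposition \ref{tr1} gives $R = k[h] = k^{[1]}$. Since $k[X,Y]$ is a \textrm{UFD} geometrically factorial over $k$, Corollary \ref{gufd slice} yields $k[X,Y] = R^{[1]} = k[h, g']$, so $h$ is a variable. Since $f \in R$, write $f = p(h)$ with $p \in k[T]$ of degree $n \ge 1$. The polynomial $p(T) - f$ is irreducible in $k[f, T]$ (being linear in $f$), hence in $k(f)[T]$ by Gauss, so $L := k(f)[h] = k(f)[T]/(p(T)-f)$ is a field of degree $n$ over $k(f)$, and $k(f)[X,Y] = L[g'] \cong L^{[1]}$ as $k(f)$-algebras. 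The hypothesis $k(f)[X,Y] = k(f)^{[1]}$ then forces $L^{[1]} \cong k(f)^{[1]}$ as $k(f)$-algebras. But $k(f)[s]$ contains no proper field extension of $k(f)$ (a nonconstant polynomial in $s$ is not invertible), so under such an isomorphism the image of $L$ equals $k(f)$, forcing $n = 1$. Thus $k[f] = k[h]$ and $k[X,Y] = k[f, g']$, so $f$ is a variable.

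The main obstacle is the descent in paragraph two: making sure the generically defined $\delta'$ twists to a genuine exponential map on $k[X,Y]$. The choice $d = ab$ works precisely because the Hasse expansion of $P(g+t)/a$ carries only a single power of the denominator $a$ at each order in $t$, which the factor $d^i$ introduced by twisting more than absorbs; everything else then reduces to the structural results (\ref{first principles}), (\ref{tr1}), (\ref{gufd slice}) together with a short field-theoretic argument.
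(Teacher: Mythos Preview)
The paper does not supply its own proof here; Theorem~\ref{generic} is quoted from \cite{PS} as a known result. Your argument is correct. The descent step works exactly as you describe: for $i\ge1$ the $t^i$-coefficient of $(d\delta')(X)$ is $a^{i-1}b^{\,i}P_i(g)\in k[f,g]\subset k[X,Y]$, so $d\delta'$ restricts to a genuine non-trivial exponential map on $k[X,Y]$, and the exponential-map axioms descend automatically from the ambient ring $k(f)[X,Y]$. After that, (\ref{tr1}) gives $k[X,Y]^{\delta}=k[h]$, (\ref{gufd slice}) gives $k[X,Y]=k[h,g']$, and your degree argument---that $k(f)^{[1]}$ contains no proper algebraic field extension of $k(f)$, since every nonzero element of such an extension would have to be a unit in $k(f)^{[1]}$---forces $k[h]=k[f]$.

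One point is worth flagging: you invoke Corollary~\ref{gufd slice} only in the special case $B=k^{[2]}$, and you should check that this case does not itself rest on the theorem you are proving. It does not: the fact that every non-trivial exponential map on $k[X,Y]$ satisfies $k[X,Y]=(k[X,Y]^{\delta})^{[1]}$ is classical and is established independently, for instance via the Jung--van der Kulk description of $\mathrm{Aut}_k(k[X,Y])$. So your route is not circular, and it yields a proof of Theorem~\ref{generic} that stays entirely within the exponential-map framework of the present paper, in contrast to the direct argument in \cite{PS}.
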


\section{Supporting results}
In this section, we prove some elementary results about exponential maps of $A^{[n]}$ of rank $1$ or $2$ and some basic results about triangular exponential maps. 

\begin{lemma}\label{basic}
  Let $B=A[X]= A^{[1]}$ and $\delta \in \Exp _A(B)$ be a non-trivial  exponential map from $A[X]\longrightarrow A[X][t]$. Then 
  \begin{enumerate}
      \item $B^{\delta}=A$.
      
      \item $X$ is a local slice of $\delta$.

      \item $\delta(X)-X \in tA[t]$.
  \end{enumerate}  
\end{lemma}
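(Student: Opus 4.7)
The plan is to treat (1), (2), (3) in order, with (1) being the key input for the latter two. Setup: $A \subset B^{\delta}$ since $\delta$ is an $A$-algebra homomorphism, and non-triviality forces $\delta(X) \neq X$ (otherwise $\delta(f(X)) = f(\delta(X)) = f(X)$ for every $f \in A[X]$, making $\delta$ trivial). Thus $X \notin B^{\delta}$ and $n := \deg_t \delta(X) \geq 1$.

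For (1), I would compare two algebraic-closedness properties. The subring $A$ is algebraically closed in $A[X] = B$ (any non-constant polynomial over a domain is transcendental over that domain), and $B^{\delta}$ is factorially, hence algebraically, closed in $B$ by (\ref{first principles})(2). If $B^{\delta}$ strictly contained $A$, then any $f \in B^{\delta} \setminus A$ would be transcendental over $A$, giving $\Tr_A B^{\delta} = 1 = \Tr_A B$; hence $B$ would be algebraic over $B^{\delta}$, forcing $B = B^{\delta}$ by algebraic closedness --- contradicting non-triviality. So $B^{\delta} = A$.

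For (2), writing $\delta(X) = X + D_1(X) t + \cdots + D_n(X) t^n$ with $D_n(X) \neq 0$, I would use $\delta(f) = f(\delta(X))$ (since $\delta$ is an $A$-algebra homomorphism) to conclude $\deg_t \delta(f) = n \deg_X(f)$ for every $f \in B$; the leading $t$-coefficient does not vanish because $A[X]$ is a domain. By (1), any $f \notin B^{\delta}$ has $\deg_X f \geq 1$, so $\deg_t \delta(f) \geq n$, showing $X$ attains the minimal positive $t$-degree. For (3), by (\ref{first principles})(1) each $D_i(X) \in B^{\delta}$ for $i \geq 1$, and (1) then refines this to $D_i(X) \in A$, so $\delta(X) - X = \sum_{i \geq 1} D_i(X) t^i \in t A[t]$.

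No serious obstacle is anticipated; the only mildly delicate step is (1), which is settled by observing that $A$ is algebraically closed in $A[X]$ so that the transcendence-degree squeeze closes the argument. Parts (2) and (3) then reduce to direct computation once (1) is in hand.
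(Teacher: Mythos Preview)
Your proposal is correct and follows essentially the same route as the paper: both use that $A$ and $B^{\delta}$ are algebraically closed in $B$ together with $\Tr_A B = 1$ to force $B^{\delta}=A$, then compute $\deg_t \delta(f) = (\deg_X f)(\deg_t \delta(X))$ for (2), and invoke (\ref{first principles})(1) combined with (1) for (3). The only cosmetic difference is that you phrase (1) by contradiction while the paper argues directly, but the underlying transcendence-degree squeeze is identical.
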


\begin{proof}
 We have that $A\subsetneq B^{\delta}$.  By $(2)$ of (\ref{first principles}), $B^{\delta}$ is algebraically closed in $B$ and hence $\Tr _{B^{\delta}}B\geq 1$. Moreover,  $A$ is also algebraically closed in $B$ and $\Tr _AB=1$. It follows that $A=B^{\delta}$. Since $\delta$ is non-trivial, we have $\delta(X)\neq X$ and let $m=\deg_t\delta(X)$. Let $P(X)\in A[X]$ and $n=\deg_X(P(X))$. Then $\deg_{t}(\delta(P(X))=m\cdot n.$  Thus, we can conclude that $X$ is a local slice of $\delta$. By $(1)$ of (\ref{first principles}), $D_i(X)\in A$ for all $i>0$, and hence $\delta(X)-X \in tA[t]$. 
\end{proof}

\begin{lemma}\label{rank1}
    Let $B=A^{[n]}$. Let  $\delta \in \Exp _A(B)$ of rank $1$ and  $\{X_1,\dots,X_n\}\in \Gamma_{\delta}(B)$. Then 
    \begin{enumerate}
        \item $B^{\delta}=A[X_1,\dots,X_{n-1}]$

        \item $\delta(X_n)-X_n\in tB^{\delta}[t]$
    \end{enumerate}
\end{lemma}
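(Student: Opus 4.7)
The plan is to reduce to Lemma \ref{basic} by regarding $\delta$ as an exponential map over the larger coefficient ring $R = A[X_1,\dots,X_{n-1}]$. Since $\{X_1,\dots,X_n\}\in \Gamma_{\delta}(B)$, we have $R\subset B^{\delta}$, so $\delta$ restricts to the identity on $R$ and therefore $\delta\in \Exp_R(B)$. On the other hand $B = R[X_n] = R^{[1]}$ by virtue of $\{X_1,\dots,X_n\}$ being a coordinate system of $B$ over $A$.

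Because $\Rk(\delta)=1>0$, the map $\delta$ is non-trivial as an element of $\Exp_A(B)$, hence also non-trivial as an element of $\Exp_R(B)$ (it has the same invariant ring). We may therefore apply Lemma \ref{basic} to $\delta\in \Exp_R(R^{[1]})$ with the indeterminate $X_n$. Part (1) of that lemma yields $B^{\delta}=R = A[X_1,\dots,X_{n-1}]$, which is exactly statement (1). Part (3) then gives $\delta(X_n)-X_n\in tR[t] = tB^{\delta}[t]$, which is statement (2).

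There is no real obstacle here: the only subtlety is making sure that rank $1$ forces $\delta$ to be non-trivial (otherwise Lemma \ref{basic} does not apply), which follows directly from the definition of rank since $\Rk(\delta)=0$ would mean $B\subset B^{\delta}$.
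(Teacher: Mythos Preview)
Your proof is correct and follows exactly the paper's approach: reduce to Lemma~\ref{basic} by viewing $\delta$ as an exponential map over $C=A[X_1,\dots,X_{n-1}]$ on $B=C[X_n]=C^{[1]}$. (The paper's proof contains a self-referential typo, citing Lemma~\ref{rank1} instead of Lemma~\ref{basic}, but the intent is clear; your write-up is in fact more careful, making explicit why $\delta$ is non-trivial.)
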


\begin{proof}
        This follows by applying (\ref{rank1}) to $\delta \in \Exp_C(B)$, where 
        $C=A[X_1,\dots,X_{n-1}]$ and $B=C[X_n]$.
\end{proof}
The following result is a direct consequence of (\ref{basic}) and (\ref{rank1}). 
\begin{corollary}\label{rk1}
    All rank $1$ exponential maps of $A^{[n]}$ are rigid and triangular.  
\end{corollary}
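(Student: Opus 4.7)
The plan is to observe that Corollary \ref{rk1} is essentially a bookkeeping consequence of the two preceding lemmas: Lemma \ref{rank1} pins down exactly what the invariant ring of a rank $1$ exponential map looks like in \emph{any} adapted coordinate system, and this rigid description feeds directly into both the rigidity and the triangularity conclusion.

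For rigidity, suppose $\{X_1,\dots,X_n\}$ and $\{X_1',\dots,X_n'\}$ both lie in $\Gamma_{\delta}(B)$. Since $\Rk(\delta)=1$, I would apply Lemma \ref{rank1}(1) to each coordinate system, which yields
\[
 A[X_1,\dots,X_{n-1}] = B^{\delta} = A[X_1',\dots,X_{n-1}'].
\]
This is precisely the rigidity condition, so nothing further is needed.

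For triangularity, I would fix any $\{X_1,\dots,X_n\}\in\Gamma_{\delta}(B)$ and verify the three conditions in the definition of triangularity. Because $X_1,\dots,X_{n-1}\in B^{\delta}$ by Lemma \ref{rank1}(1), we have $\delta(X_i)=X_i$ for $1\leq i\leq n-1$, which trivially satisfies conditions (a) and (b) of the triangularity definition for those indices. For $i=n$, Lemma \ref{rank1}(2) gives $\delta(X_n)-X_n\in tB^{\delta}[t]=tA[X_1,\dots,X_{n-1}][t]$, which is exactly condition (b) at $i=n$. The only remaining point is condition (c): the smallest $i$ with $\delta(X_i)\neq X_i$ is $n$ (non-triviality of $\delta$ forces $X_n\notin B^{\delta}$, since otherwise $\Rk(\delta)=0$), and I need $X_n$ to be a local slice. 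This follows by viewing $\delta$ as a $C$-algebra exponential map on $B=C[X_n]=C^{[1]}$, where $C=A[X_1,\dots,X_{n-1}]=B^{\delta}$, and invoking Lemma \ref{basic}(2).

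I do not foresee a genuine obstacle here; the content of the corollary has been absorbed into the two lemmas above it. The only minor care needed is to justify why $\delta$ restricts to a non-trivial exponential map over $C$ in the last step, which is immediate since $\delta$ is an $A$-algebra map fixing $C\subset B^{\delta}$.
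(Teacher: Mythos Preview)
Your proposal is correct and follows exactly the approach the paper intends: the paper does not give a separate proof for Corollary~\ref{rk1} but simply declares it ``a direct consequence of (\ref{basic}) and (\ref{rank1}),'' and you have spelled out precisely those consequences. The only addition you make is the explicit verification of condition~(c) via Lemma~\ref{basic}(2), which the paper leaves implicit but which is indeed the right justification.
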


The following result gives a description of ring of invariants of $\Rk$ $2$ exponential maps of $k^{[n]}$. When $k$ is of zero characteristic, this result is due to \cite[corollary $3.2$]{DF}).

\begin{proposition}\label{rank2}
    Suppose $A$ is a $\mathrm{HCF}$-ring   with $K=\f (A)$. Let $B=A^{[n]}$ and $\delta \in \Exp _A(B)$.  Suppose $\Rk (\delta)=2$ and  $\{X_1,\cdots,X_n\}\in \Gamma_{\delta}(B)$. Then $B^{\delta}=A^{[n-1]}=A[X_1,\cdots,X_{n-2}][f]$, where $f\in B$ and $f$ is a variable of $K(X_1,\cdots,X_{n-2})[X_{n-1},X_n]$.
\end{proposition}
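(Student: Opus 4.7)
The plan is to reduce the problem to a rank-2 exponential map on a two-variable polynomial ring and then apply the available structure theorems twice: once over a coefficient ring to identify $B^\delta$, and once after localization to recognize the generator as a variable.

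First, I would set $C = A[X_1, \ldots, X_{n-2}]$, so that $B = C[X_{n-1}, X_n] = C^{[2]}$. Because $\{X_1,\ldots,X_n\} \in \Gamma_\delta(B)$, we have $C \subset B^\delta$, and hence $\delta$ is a $C$-algebra exponential map on $B$. It is non-trivial over $C$, since otherwise $B^\delta = B$ would force $\Rk(\delta) = 0$, contradicting $\Rk(\delta) = 2$. To invoke Proposition~\ref{tr2}, I need $C$ to be an HCF-ring; this is the standard fact that a polynomial extension of a GCD domain is again a GCD domain, applied inductively starting from the hypothesis that $A$ is HCF. Proposition~\ref{tr2} then produces $f \in B \setminus C$ with $B^\delta = C[f] = C^{[1]}$, giving the first assertion $B^\delta = A[X_1,\ldots,X_{n-2}][f] = A^{[n-1]}$.

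Second, I would pass to the generic fiber over $C$. Let $S = C \setminus \{0\}$ and $L = K(X_1, \ldots, X_{n-2}) = \f(C)$. Since $C \subset B^\delta$, Proposition~\ref{first principles}(3) extends $\delta$ to a non-trivial exponential map $S^{-1}\delta$ on $S^{-1}B = L[X_{n-1}, X_n] = L^{[2]}$ with $(S^{-1}B)^{S^{-1}\delta} = S^{-1}(B^\delta) = L[f]$. Now $L^{[2]}$ is a UFD over the field $L$ with $\Tr_L L^{[2]} = 2$, and is geometrically factorial because $L^{[2]} \otimes_L L' = L'^{[2]}$ is a UFD for every algebraic extension $L'/L$. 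Corollary~\ref{gufd slice} therefore yields $L^{[2]} = (L[f])^{[1]}$, which is precisely the statement that $f$ is a variable of $L[X_{n-1}, X_n] = K(X_1,\ldots,X_{n-2})[X_{n-1},X_n]$.

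The main conceptual obstacle is really just identifying the right base ring to make the two-variable theory apply; the non-triviality check over $C$ is cheap, and once $C$ is seen to be HCF, Proposition~\ref{tr2} does the heavy lifting for the first half. The potentially fussy step is the second half, where one must confirm that all hypotheses of Corollary~\ref{gufd slice} transfer after localization---non-triviality, the transcendence degree bound, and geometric factoriality---but each of these is immediate from the fact that polynomial rings over fields behave well under algebraic base change. I expect no characteristic-specific difficulties, which is the whole point of phrasing everything via exponential maps.
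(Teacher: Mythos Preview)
Your proposal is correct and follows essentially the same route as the paper: apply Proposition~\ref{tr2} over $C=A[X_1,\dots,X_{n-2}]$ to obtain $B^{\delta}=C[f]$, then localize at $S=C\setminus\{0\}$ and invoke Corollary~\ref{gufd slice} on $L[X_{n-1},X_n]$ to see that $f$ is a variable. You have in fact supplied more detail than the paper does---the HCF property of $C$, non-triviality over $C$, and the verification of the hypotheses of Corollary~\ref{gufd slice}---all of which the paper leaves implicit.
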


\begin{proof}
    Applying (\ref{tr2}) with $R=A[X_1,\cdots,X_{n-2}]$ , it follows that $B^{\delta}=A[X_1,\cdots,X_{n-2}][f]$.
    
    Let $S=A[X_1,\cdots,X_{n-2}]\setminus \{0\}$. By $(3)$ of (\ref{first principles}), $\delta$ induces a non-trivial exponential map $(S^{-1}\delta)$ on  $K(X_1,\cdots,X_{n-2})[X_{n-1},X_n]$ with ring of invariants $K(X_1,\cdots,X_{n-2})[f]$. By (\ref{gufd slice}), the result follows. 
\end{proof}

\begin{corollary}\label{slice}
    Let $B=k^{[3]}$ and $\delta \in \Exp (B)$. 
     \begin{enumerate}
         \item  If $\delta$ has  a slice,  then $B^{\delta}=k^{[2]}$ and $\Rk (\delta)=1$.

         \item  If $\delta$ is triangular with respect to $\{X,Y,Z\}\in \Gamma(B)$ and $\Rk(\delta)=2$, then $\delta(X)=X$. In particular, any triangular exponential map of $k^{[3]}$ has rank at most $2$.
     \end{enumerate}
\end{corollary}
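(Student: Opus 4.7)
For part $(1)$, my plan is to apply the Slice Theorem (\ref{slice theorem}) directly to the given slice $x$, obtaining $B=B^{\delta}[x]=(B^{\delta})^{[1]}$; combining with $B=k^{[3]}$ yields $(B^{\delta})^{[1]}=k^{[3]}$. I then invoke the cancellation theorem for $k^{[2]}$, valid in arbitrary characteristic, which asserts that any $k$-algebra $R$ with $R^{[1]}=k^{[3]}$ satisfies $R=k^{[2]}$. Applied to $R=B^{\delta}$, this gives $B^{\delta}=k^{[2]}$. Fixing a coordinate system $\{Y_1,Y_2\}$ of $B^{\delta}$ over $k$, the triple $\{Y_1,Y_2,x\}$ lies in $\Gamma(B)$ and satisfies $k[Y_1,Y_2]\subset B^{\delta}$, so $\Rk(\delta)\leq 1$; since the existence of a slice forces $\delta$ to be non-trivial, $\Rk(\delta)\geq 1$, and equality holds.

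For the first claim of $(2)$, I assume $\delta$ is triangular with respect to $\{X,Y,Z\}$ with $\Rk(\delta)=2$, so $\delta$ is non-trivial, and argue by contradiction. Suppose $\delta(X)\neq X$. Then $X$ is the smallest coordinate moved by $\delta$: clause $(c)$ of the definition of triangular makes $X$ a local slice, while clause $(a)$ gives $\delta(X)-X\in tk[t]$. Setting $n=\deg_t\delta(X)$, the leading coefficient $D_n(X)$ lies in $k\setminus\{0\}$ and is therefore a unit of $B$, so $X$ is actually a slice. Part $(1)$ then forces $\Rk(\delta)=1$, contradicting the hypothesis; hence $\delta(X)=X$.

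The ``in particular'' assertion will follow by the same mechanism: if $\delta$ were a triangular map of rank $3$, then no variable of the triangularizing system could lie in $B^{\delta}$, so $\delta(X)\neq X$, and the argument above turns $X$ into a slice and forces rank $1$ via part $(1)$, an absurdity; hence rank is at most $2$. I expect the sole genuine difficulty to be the cancellation-for-$k^{[2]}$ input in part $(1)$; every other step is direct bookkeeping with the definitions of triangular, local slice, and slice together with the Slice Theorem.
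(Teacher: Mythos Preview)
Your proof is correct and follows essentially the same route as the paper: for (1) the paper also applies the Slice Theorem to get $B=(B^{\delta})^{[1]}$ and then invokes cancellation for $k^{[2]}$ (citing \cite[Theorem~2.7]{BG}) to conclude $B^{\delta}=k^{[2]}$ and $\Rk(\delta)=1$; for (2) the paper likewise observes that $\delta(X)\in X+tk[t]$ forces $X$ to be a slice when $\delta(X)\neq X$, and then appeals to part (1). Your write-up is slightly more explicit (spelling out the coordinate system $\{Y_1,Y_2,x\}$ and the ``in particular'' clause), but the argument is the same.
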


\begin{proof}
    \emph{$(1):$} By $(4)$ of $\ref{first principles}$, $B=(B^{\delta})^{[1]}$. Since $k^{[2]}$ is cancellative \cite[Theorem $2.7$]{BG}, we get $B^{\delta}=k^{[2]}$ and since $(B^{\delta})^{[1]}=k^{[3]}$, it follows that $\Rk (\delta)=1$.

    \emph{$(2):$} As $\delta$ is triangular, hence  $\delta(X) \in X+tk[t]$.  If $\delta(X)\neq X$, then $X$ is a slice, and by the previous part we get $\Rk(\delta)=1$ which is a contradiction. This proves that $\delta(X)=X$.
\end{proof}

\section{Rigidity of exponential maps}
In this section, we show that all exponential maps of $k^{[3]}$ are rigid and extend this result to exponential maps of $A^{[n]}$ under some assumptions. The following lemma is proved by Daigle \cite[Lemma $2.4$]{DD} in zero characteristic for locally nilpotent derivations of $k^{[3]}$. We use similar arguments for exponential maps. 
\begin{lemma}\label{technical}
    Let $B=k^{[3]}$ and $\delta \in \Exp(B)$ with $\Rk (\delta)=2$ and $A=B^{\delta}$. Define 
     $$\ms{R}(\delta)=\{\alpha \in \s A|\kappa(\alpha) \tens{A} B \neq \kappa(\alpha)^{[1]}\}$$ where $\kappa(\alpha)=A_{\alpha}/\alpha A_{\alpha}$ and let $\o{\ms{R}(\delta)}$ be the closure of $\ms{R}(\delta)$ in $\s A$. Then
     \begin{enumerate}
         \item $\D  ( \o{\ms{R}(\delta)})=1$.

         \item Let $\eta$ be an irreducible element of $A$ such that $V(\eta)$ is an irreducible component of $\o{\ms{R}(\delta)}$. Then, for every $\{X,Y,Z\}\in \Gamma_{\delta}(B)$, $k[X]$ is the integral closure of $k[\eta]$ in $B$. 
     \end{enumerate}
\end{lemma}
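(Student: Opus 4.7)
The plan is to describe $\ms{R}(\delta)$ explicitly via the fibration $\s A \to \s k[X]$ induced by a coordinate system, and to read both assertions off from the fact that the bad locus decomposes into ``vertical'' curves $V(p(X))$ for $p \in k[X]$.

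First I would fix $\{X, Y, Z\} \in \Gamma_\delta(B)$ and apply (\ref{rank2}) with base ring $k$, obtaining $A = B^\delta = k[X][f] = k^{[2]}$ with $f \in B = k[X, Y, Z]$ a variable of $k(X)[Y, Z]$. Presenting $B$ as the $A$-algebra $\bigl(A \tens{k} k[Y, Z]\bigr)\big/\bigl(f - f(X, Y, Z)\bigr)$ and tensoring with $L := \kappa(\alpha)$ for $\alpha \in \s A$ yields
\[
\kappa(\alpha) \tens{A} B \;\cong\; L[Y, Z] \big/ \bigl(f(\bar X, Y, Z) - \bar f\bigr),
\]
so $\alpha \notin \ms{R}(\delta)$ iff $f(\bar X, Y, Z)$ is a variable of $L[Y, Z]$. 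For an irreducible $\eta \in A$ with $\eta \notin k[X]$, a degree-in-$f$ argument shows $(\eta) \cap k[X] = 0$, so $k(X) \hookrightarrow L$ with $\bar X$ transcendental over $k$. Base-changing the identity $k(X)[Y, Z] = k(X)[f(X, Y, Z), h(X, Y, Z)]$ along $X \mapsto \bar X$ — legitimate since nonzero $q(X) \in k[X]$ are invertible at $\bar X$ — gives $L[Y, Z] = L[f(\bar X, Y, Z), h(\bar X, Y, Z)]$, hence $(\eta) \notin \ms{R}(\delta)$. Thus every irreducible component of $\o{\ms{R}(\delta)}$ has the form $V(p(X))$ for some irreducible $p \in k[X]$, forcing $\D\bigl(\o{\ms{R}(\delta)}\bigr) \leq 1$.

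For (1), if $\D\bigl(\o{\ms{R}(\delta)}\bigr) = 0$, no height-$1$ prime of $A$ lies in $\ms{R}(\delta)$, and the fiber formula forces $f$ to be a residual variable of $k[X][Y, Z]$ over $k[X]$; applying (\ref{residual}) (valid since $k[X]$ is Noetherian and seminormal), $f$ is a variable of $B$ over $k[X]$, so $B = A[g]$, and the coordinate system $\{X, f, g\}$ contradicts $\Rk(\delta) = 2$. For (2), each component is $V(\eta)$ with $\eta \in k[X]$ irreducible non-constant, so $k[\eta] \subset k[X]$ with $k[X]$ integral over $k[\eta]$. Since $k[X]$ is factorially closed in $A = k[X][f]$ and $A = B^\delta$ is factorially closed in $B$ by (\ref{first principles})(2), $k[X]$ is algebraically closed in $B$; combined with $k(\eta) \subset k(X)$ and $k(X)$ being algebraically closed in $\f(B)$, the integral closure of $k[\eta]$ in $B$ sits inside $k(X) \cap B = k[X]$ and equals $k[X]$ by integrality. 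The main technical hurdle is the step in (1): the fiber formula at $(p(X))$ yields variable-hood of $f(\bar X, Y, Z)$ over $(k[X]/p)(f)$ rather than over $k[X]/p$, and descending this along a purely transcendental extension — likely via (\ref{generic}) combined with specialization to closed points of $\s A$ above $(p(X))$ — is the key difficulty.
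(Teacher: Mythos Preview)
Your outline follows the paper's proof quite closely: both fix $\{X,Y,Z\}\in\Gamma_\delta(B)$, invoke (\ref{rank2}) to get $A=k[X,f]$ with $f$ a variable of $k(X)[Y,Z]$, establish $\dim\leq 1$ by showing the generic fibre is $\A^1$, establish $\dim\geq 1$ via (\ref{residual}) and (\ref{generic}), and deduce (2) from $\eta\in k[X]$. Your identification of the ``main technical hurdle'' is exactly right --- the paper argues the contrapositive: from $\Rk(\delta)=2$ it gets that $f$ is \emph{not} residual over $k[X]$ by (\ref{residual}), picks a nonzero $\mf p\in\s k[X]$ where $\bar f$ fails to be a variable of $L[\bar Y,\bar Z]$, computes $\kappa(\mf pA)\tens{A}B=L(\bar f)[\bar Y,\bar Z]$, and uses (\ref{generic}) to conclude that $\mf pA\in\ms R(\delta)$.

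There is, however, a genuine gap in your upper-bound step. Showing that every height-$1$ prime $(\eta)$ with $\eta\notin k[X]$ lies outside $\ms R(\delta)$ does \emph{not} by itself force $\dim\o{\ms R(\delta)}\leq 1$: maximal ideals of $A$ all meet $k[X]$ nontrivially, so your argument says nothing about them, and a priori a dense set of closed points in $\ms R(\delta)$ could give a $2$-dimensional closure. Your claimed equivalence ``$\alpha\notin\ms R(\delta)$ iff $f(\bar X,Y,Z)$ is a variable of $L[Y,Z]$'' is also not justified in the $\Rightarrow$ direction (one fibre being $\A^1$ does not make a polynomial a variable). The paper avoids both issues with a single stroke: from $k(X)[Y,Z]=k(X)[f,g]$ one clears denominators to obtain $h\in k[X]\setminus\{0\}$ with $B_h=A_h[g]=A_h^{[1]}$, whence $\ms R(\delta)\subseteq V(h)$ outright. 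This immediately gives $\dim\leq 1$ and, for part (2), forces any component $V(\eta)$ to satisfy $\eta\mid h$, so $\eta\in k[X]$. Your base-change argument is really this same denominator-clearing in disguise; stating it as the paper does closes the gap.
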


\begin{proof}

   \emph{(1): } Let $\{X,Y,Z\} \in \Gamma_{\delta}(B)$, where $B=k^{[3]}=k[X,Y,Z]$. By (\ref{rank2}), \begin{equation} \label{one}
A=k[X,f],
\end{equation}
 for some $f\in B$ and $f$ is a variable of $k(X)[Y,Z]$, which means that there exists $g\in k(X)[Y,Z]$ such that \begin{equation}\label{two}
      k(X)[Y,Z]=k(X)[f,g]. 
 \end{equation}
Since $\Rk (\delta)=2$, \begin{equation}\label{three}
     \textit{f is not a variable of B over } k[X].
\end{equation} 

By (\ref{two}), there exists $h\in k[X]\setminus \{0\}$ such that $B_h=(k[X])_h[f,g]$, which is same as $B_h=A_h^{[1]}$ and hence $\ms{R}(\delta)\subseteq V(h)$. Thus $\o{\ms{R}(\delta)}\subset V(h)\subset \s A$ and it follows that $\D (\o{\ms{R}(\delta)})\leq 1$. We will show that $\D (\o{\ms{R}(\delta)})>0$.

Since $k[X]$ is a Noetherian domain and seminormal, by (\ref{three}) and (\ref{residual}), it follows that $f$ is not a residual variable of $B$ over $k[X]$. Thus, there exists a prime ideal $\mf{p}\in \s k[X]$ such that \begin{equation}\label{four}
    \o{f} \textit{ is not a variable of } L[\o{Y},\o{Z}], 
\end{equation}
where $L=\kappa(\mf{p})=\f (k[X]/\mf{p})$. By (\ref{two}), $\mf{p}$ must be a non-zero prime ideal and hence $L=\kappa(\mf{p})=k[X]/\mf{p}$. Let $\alpha=\mf{p}A=\mf{p}[f] \in \s A$. Similarly, $\mf{p}B=\mf{p}[Y][Z]$ and hence $\mf{p}B \in \s B.$

\emph{claim: $\alpha \in \ms{R}(\delta)$}

We have $k[X]\xhookrightarrow
{}A \xhookrightarrow{} B$. Since $\mf{p}$ is a principal ideal of $k[X]$, $\alpha$ and $\mf{p}B$ are also principal ideals. We have $\alpha$ contracts to $\mf{p}$ and since $A$ is factorially closed in $B$, thus $\mf{p}B$ contracts to $\alpha$. Hence, we have 
$$k[x]/\mf{p} \xhookrightarrow{}A/\alpha \xhookrightarrow{} B/\mf{p}B,$$
where $B/\mf{p}B=L[\o{Y},\o{Z}]$. Since $\alpha=\mf{p}[f]$, $A/\alpha= L[\o{f}]$ and thus $\kappa(\alpha)=A_{\alpha}/\alpha A_{\alpha}=\f (A/\alpha)=L(\o{f})  \subset \f (L[\o{Y},\o{Z}])$. Similarly, we have $\kappa(\alpha)\tens{A}B=\f (A/\alpha)\tens{A}B=S^{-1}A\tens{A}A/\alpha\tens{A}B=S^{-1}A\tens{A}(B/\mf{p} B)=\o{S}^{-1}L[\o{Y},\o{Z}]$, where $S=A\setminus \alpha$ and $\o{S}$ is the image of $S$ in $L[\o{Y},\o{Z}]$ given by $\o{S}=L[\o{f}]\setminus 0$. We get  $\kappa(\alpha)\tens{A}B=L(\o{f})[\o{Y},\o{Z}] \subset \f (L[\o{Y},\o{Z}])$.

Thus, if $\kappa(\alpha) \tens{A}B=\kappa(\alpha)^{[1]}$, then $L(\o{f})[\o{Y},\o{Z}]=(L(\o{f}))^{[1]}$ when identified inside $\f  (L[\o{Y},\o{Z}])$. It follows that $\o{f}$ is generically a line of $L[\o{Y},\o{Z]}$. By (\ref{generic}), $\o{f}$ is a variable of $L[\o{Y},\o{Z}]$, which contradicts (\ref{four}). Hence, $\alpha \in \ms{R}(\delta)$. Since $\alpha$ is not a maximal ideal of $A$, it follows that $\D (\o{\ms{R}(\delta)})>0$ and hence $\D (\o{\ms{R}(\delta)})=1$.

\emph{(2): } For this part, let $\eta \in A$ be an irreducible element of $A$ such that $V(\eta)$ is an irreducible component of $\o{\ms{R}(\delta)}$. By the previous part, we have $\o{\ms{R}(\delta)} \subset V(h)$, it follows that $V(\eta) \subset V(h)$ and hence $\eta$ is a prime factor of $h$ and in particular, $\eta \in k[X]$. Thus, $k[\eta] \subset k[X]$ is an integral extension, and it follows that $k[X]$ is the integral closure of $k[\eta]$ in $B$.
\end{proof}
The following theorem is proved in \cite[Theorem $2.5$]{DD} when $k$ is of zero characteristic. 
\begin{theorem}\label{k3}
    All exponential maps of $k^{[3]}$ are rigid. 
\end{theorem}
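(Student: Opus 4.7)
The plan is to split into cases based on $\Rk(\delta)$. If $\delta$ is trivial then rigidity is vacuous; otherwise the rank lies in $\{1,2,3\}$. The rank $1$ case is already covered by Corollary \ref{rk1}. The rank $3$ case is also immediate: when $\Rk(\delta)=3$, for every $\{X_1,X_2,X_3\}\in\Gamma_\delta(B)$ we have $n-r=0$, so the subring $k[X_1,\dots,X_{n-r}]$ is just $k$ regardless of the coordinate system, and rigidity is a tautology. The whole content of the theorem thus reduces to the rank $2$ case.

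For that case, I would extract a coordinate-free description of $k[X]$ directly from Lemma \ref{technical}. Set $A=B^\delta$. By Proposition \ref{rank2} applied to the $\mathrm{HCF}$-ring $k$, we have $A\cong k^{[2]}$, which is a $\Ufd$, so every height-$1$ prime of $A$ is principal and cut out by a single irreducible element. Lemma \ref{technical}(1) tells us that $\overline{\ms{R}(\delta)}\subset \s A$ is purely $1$-dimensional; its own proof furthermore constructs an explicit point $\alpha\in\ms{R}(\delta)$, so the locus is nonempty and therefore admits at least one irreducible component. Pick an irreducible $\eta\in A$ with $V(\eta)$ an irreducible component of $\overline{\ms{R}(\delta)}$.

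Now invoke Lemma \ref{technical}(2): for \emph{every} $\{X,Y,Z\}\in\Gamma_\delta(B)$, the subring $k[X]$ equals the integral closure of $k[\eta]$ in $B$. The right-hand side depends only on $\eta$ and $B$, both of which are intrinsic to $\delta$ (the choice of $\eta$ comes from the geometry of $\ms{R}(\delta)$, not from any coordinate system). Consequently $k[X]=k[X']$ for any two $\{X,Y,Z\},\{X',Y',Z'\}\in\Gamma_\delta(B)$, which is precisely rigidity in rank $2$.

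The main obstacle has already been absorbed into Lemma \ref{technical}; after that lemma, Theorem \ref{k3} is essentially a tautology modulo the rank case-split. The only cleanup the writeup needs is to note that $\overline{\ms{R}(\delta)}$ is nonempty (so some $\eta$ exists) and that $\eta$ can be taken irreducible using the $\Ufd$ structure of $A=k^{[2]}$, together with dispatching the trivial cases $\Rk(\delta)\in\{0,1,3\}$.
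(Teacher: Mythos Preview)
Your proposal is correct and follows essentially the same approach as the paper's proof: dispatch ranks $0,1,3$ trivially (the paper omits rank $0$ but handles $1$ via Corollary~\ref{rk1} and $3$ by definition), and for rank $2$ use Lemma~\ref{technical} to pick an irreducible $\eta\in A=B^\delta\cong k^{[2]}$ with $V(\eta)$ a component of $\overline{\ms{R}(\delta)}$, so that every $k[X]$ with $\{X,Y,Z\}\in\Gamma_\delta(B)$ coincides with the integral closure of $k[\eta]$ in $B$. The extra care you take in noting that $\overline{\ms{R}(\delta)}\neq\emptyset$ and that $A$ is a $\Ufd$ is fine but already implicit in the paper's argument.
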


\begin{proof}
    Let $B=k^{[3]}$. By (\ref{rk1}), all exponential maps of $B$ of rank $1$ are rigid. Let $\delta \in \Exp (B)$ such that $\Rk (\delta)=2$ and $A=B^{\delta}$. Let $\{X,Y,Z\}, \{X',Y',Z'\} \in \Gamma_{\delta}(B)$. We can choose $\eta \in A$ such that $V(\eta)$ is an irreducible component of $\o{\ms{R}(\delta)}$. This is possible since $\D (\o{\ms{R}(\delta))})=1$ and height one prime ideals of $A=k^{[2]}$ are principal. By (\ref{technical}), $k[X]$ and $k[X']$ are both equal to the integral closure of $k[\eta]$ in $B$. It follows that $k[X]=k[X']$. Thus, all rank $2$ exponential maps of $k^{[3]}$ are rigid. By definition, all rank $3$ exponential maps of $k^{[3]}$ are rigid, and we are done. 
\end{proof}

The following theorem is proved in \cite[Theorem $3.1$]{KL} when $k$ is of zero characteristic. 
\begin{proposition}\label{Dk2D}
    Let $\delta \in \Exp _A(A^{[n]})$. Assume that $\Rk (\delta)=\Rk (\delta_K)$, where $K=\f (A)$ and $\delta_K\in \Exp (K^{[n]})$ is defined as in $(3)$ of (\ref{first principles}). If $\delta_K$ is rigid, then  $\delta$ is rigid. 
\end{proposition}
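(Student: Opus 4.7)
The plan is to reduce to the assumed rigidity of $\delta_K$ by showing that coordinate systems in $\Gamma_\delta(B)$ remain in $\Gamma_{\delta_K}(B_K)$ after base change, and then to descend the resulting equality of invariant subalgebras from $K^{[n]}$ back to $A^{[n]}$ via Lemma \ref{unique}. Write $B = A^{[n]}$, $B_K = K \otimes_A B = K^{[n]}$, and $r = \Rk(\delta) = \Rk(\delta_K)$. Fix $\{X_1,\ldots,X_n\}, \{X_1',\ldots,X_n'\} \in \Gamma_\delta(B)$ and set $R = A[X_1,\ldots,X_{n-r}]$, $R' = A[X_1',\ldots,X_{n-r}']$; the goal is to show $R = R'$. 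The degenerate cases $r=0$ and $r=n$ are trivial, so I assume $0 < r < n$.

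First I would verify that both systems lie in $\Gamma_{\delta_K}(B_K)$. Tensoring with $K$ preserves coordinate systems, so $B_K = K[X_1,\ldots,X_n] = K[X_1',\ldots,X_n']$; and by part $(3)$ of Proposition \ref{first principles}, applied with $S = A \setminus \{0\}$, we have $B_K^{\delta_K} = S^{-1}(B^\delta)$, so the inclusions $R, R' \subset B^\delta$ localize to $K[X_1,\ldots,X_{n-r}], K[X_1',\ldots,X_{n-r}'] \subset B_K^{\delta_K}$. Here the rank-preservation hypothesis is essential: it guarantees that the splitting index $n-r$ on the $A$-side coincides with the index defining $\Gamma_{\delta_K}(B_K)$, so both systems indeed lie in $\Gamma_{\delta_K}(B_K)$. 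The assumed rigidity of $\delta_K$ then gives $K[X_1,\ldots,X_{n-r}] = K[X_1',\ldots,X_{n-r}']$.

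To descend to $A$, note that $\{X_1,\ldots,X_n\}$ and $\{X_1',\ldots,X_n'\}$ being coordinate systems of $A^{[n]}$ forces algebraic independence over $A$, hence $R, R' \cong A^{[n-r]}$ and $B = R^{[r]} = R'^{[r]}$. Fix $i$ with $1 \leq i \leq n-r$. From $X_i' \in K[X_1,\ldots,X_{n-r}]$, there exist $0 \neq a_i \in A$ and $p_i \in R$ with $a_i X_i' = p_i \neq 0$, and since $a_i \in A \subset R'$, this element lies in $X_i' R' \cap R$. Applying Lemma \ref{unique} with $(B, R, R', r)$ in place of the lemma's $(A, R, S, n)$ yields $X_i' \in R$. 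Hence $R' \subset R$, and the symmetric argument gives $R = R'$. The only real obstacle is the bookkeeping around Lemma \ref{unique}; once the rank hypothesis is used to align the coordinate systems on both sides of the base change, the descent is immediate.
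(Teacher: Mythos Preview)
Your proof is correct and follows essentially the same route as the paper, which simply cites Lemma~\ref{unique} and the argument of \cite[Theorem 3.1]{KL}: base-change the two coordinate systems to $K^{[n]}$, use $\Rk(\delta)=\Rk(\delta_K)$ and the rigidity of $\delta_K$ to obtain $K[X_1,\ldots,X_{n-r}]=K[X_1',\ldots,X_{n-r}']$, and then descend via Lemma~\ref{unique}. Your verification that $B=R^{[r]}=R'^{[r]}$ and that $a_iX_i'\in X_i'R'\cap R$ is exactly what is needed to invoke the lemma.
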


\begin{proof}
    This follows by using  $(\ref{unique})$ and following the same proof as in \cite[Theorem $3.1$]{KL}.
\end{proof}
The following result follows from (\ref{k3}) and $(\ref{Dk2D})$.
\begin{theorem}\label{use}
    Let $\delta \in \Exp _A(A^{[3]})$. Suppose $\Rk (\delta)=\Rk (\delta_K)$, then $\delta$ is rigid. 
\end{theorem}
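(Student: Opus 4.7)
The statement Theorem~\ref{use} is essentially a direct corollary of the two preceding results, so my plan is to simply assemble them. The hypothesis is that $\delta \in \Exp_A(A^{[3]})$ satisfies $\Rk(\delta) = \Rk(\delta_K)$, where $\delta_K \in \Exp(K^{[3]})$ is the induced exponential map over the fraction field $K = \f(A)$.

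First I would invoke Theorem~\ref{k3}: since $K^{[3]}$ is a polynomial ring in three variables over the field $K$, every exponential map on $K^{[3]}$ is rigid, so in particular $\delta_K$ is rigid. Then, because the rank hypothesis $\Rk(\delta) = \Rk(\delta_K)$ is precisely what Proposition~\ref{Dk2D} requires, I would apply that proposition to conclude directly that $\delta$ is rigid.

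There is really no obstacle here; all the work has already been done in Lemma~\ref{technical} and Theorem~\ref{k3} (handling the field case via the geometric/residual-variable argument) and in Proposition~\ref{Dk2D} (transferring rigidity from $\delta_K$ down to $\delta$ via Lemma~\ref{unique}). The only thing worth double-checking is that the rank-preservation hypothesis is invoked in exactly the form required by Proposition~\ref{Dk2D}, which it is. So the proof I would write is a single line: apply \ref{k3} to get rigidity of $\delta_K$, then apply \ref{Dk2D} to descend to $\delta$.
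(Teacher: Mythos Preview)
Your proposal is correct and matches the paper's own proof exactly: the paper simply states that the result follows from (\ref{k3}) and (\ref{Dk2D}), which is precisely the two-step argument you outline.
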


\section{Triangularity of exponential maps}
In this section, we give a necessary and sufficient condition for the triangularity of an exponential map of $k^{[3]}$ and extend this result to exponential maps of $A^{[n]}$ under some assumptions. The following result is proved in \cite[Corollary $3.3$]{DD} when $k$ is of zero characteristic. 
\begin{proposition}\label{tri1}
    Let $B=k^{[3]}$ and $\delta \in \Exp (B)$ such that $\Rk (\delta)=2$. Suppose $\delta$ is triangular with respect to $\{X,Y,Z\}\in \Gamma(B)$. Then for all $\alpha \in B^{\delta}$, we have $\alpha \delta $ is triangular if and only if $  \alpha \in k[X]$.
\end{proposition}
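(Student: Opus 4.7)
The plan is to exploit the explicit description $A := B^\delta = k[X][f]$ from Proposition~\ref{rank2}, together with the fact that $\delta(X)=X$ supplied by Corollary~\ref{slice}(2) (since $\delta$ is triangular and $\Rk(\delta)=2$). Two preliminary observations I would record: the $i$-th iterative derivative of $\alpha\delta$ is $\alpha^{i}D_{i}$, where $D_{i}$ is that of $\delta$; and for any $\alpha\in A\setminus\{0\}$, the equality $\alpha\delta(x)=x$ is equivalent to $\delta(x)=x$ (since $B[t]$ is a domain), so $B^{\alpha\delta}=B^{\delta}=A$ and $\Gamma_{\alpha\delta}(B)=\Gamma_{\delta}(B)$. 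The case $\alpha=0$ is immediate in both directions, so I would henceforth assume $\alpha\ne 0$.

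For the sufficiency, given $\alpha\in k[X]$ I would verify that $\alpha\delta$ is triangular with respect to the same coordinate system $\{X,Y,Z\}$. The identity $\alpha\delta(X)=X$ is clear, and the expansions
\[
\alpha\delta(Y)-Y = \sum_{i\ge 1}\alpha^{i}D_{i}(Y)\,t^{i},\qquad \alpha\delta(Z)-Z = \sum_{i\ge 1}\alpha^{i}D_{i}(Z)\,t^{i}
\]
lie in $tk[X][t]$ and $tk[X,Y][t]$ respectively, because $D_{i}(Y)\in k[X]$ and $D_{i}(Z)\in k[X,Y]$ by the triangularity of $\delta$, and $\alpha\in k[X]\subset k[X,Y]$. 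The local-slice condition in the definition of triangularity transfers from $\delta$ to $\alpha\delta$ since, by construction, a local slice of $\delta$ is also a local slice of $\alpha\delta$.

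For the necessity, suppose $\alpha\delta$ is triangular with respect to some $\{X',Y',Z'\}\in\Gamma(B)$. Since $\Rk(\alpha\delta)=\Rk(\delta)=2$, Corollary~\ref{slice}(2) gives $\alpha\delta(X')=X'$, whence $X'\in A$ and $\{X',Y',Z'\}\in\Gamma_{\alpha\delta}(B)=\Gamma_{\delta}(B)$; Theorem~\ref{k3} applied to $\alpha\delta$ then yields $k[X']=k[X]$. Next, $\alpha\delta(Y')\ne Y'$, for otherwise $k[X',Y']\subset B^{\alpha\delta}$ would force $\Rk(\alpha\delta)\le 1$. Hence $Y'$ is a local slice of $\alpha\delta$, and therefore of $\delta$; setting $m=\deg_{t}\delta(Y')$, Proposition~\ref{first principles}(1) puts $D_{m}(Y')\in A\setminus\{0\}$, and the triangularity of $\alpha\delta$ forces the top $t$-coefficient $\alpha^{m}D_{m}(Y')$ to lie in $k[X']=k[X]$.

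To finish, I would compare $f$-degrees in the polynomial ring $A=k[X][f]$: the nonzero element $\alpha^{m}D_{m}(Y')$ has $f$-degree $0$, so $m\,\deg_{f}\alpha+\deg_{f}D_{m}(Y')=0$ forces $\deg_{f}\alpha=0$, i.e., $\alpha\in k[X]$. The principal obstacle is the necessity direction, where one cannot \emph{a priori} assume $X'=X$; Theorem~\ref{k3} supplies the crucial $k[X']=k[X]$, after which everything collapses to a one-line degree count in $k[X][f]$.
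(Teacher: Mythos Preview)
Your proof is correct and follows essentially the same route as the paper's: use Corollary~\ref{slice}(2) to get $\delta(X)=X$ and $\alpha\delta(X')=X'$, invoke rigidity (Theorem~\ref{k3}) to obtain $k[X']=k[X]$, and then read off $\alpha\in k[X]$ from the triangularity condition on $Y'$. The only difference is in this last extraction step: the paper observes directly that some nonzero coefficient $\alpha^{i}D_{i}(Y')$ lies in $k[X']$ and uses the factorial closedness of $k[X']$ in $B=k[X',Y',Z']$ to conclude $\alpha\in k[X']$, whereas you pass through the description $B^{\delta}=k[X][f]$ from Proposition~\ref{rank2} and run a degree count in $f$. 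Both arguments are really the same observation (a nonzero product landing in a factorially closed subring forces the factors in), so your invocation of Proposition~\ref{rank2} is not strictly necessary, but it does no harm.
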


\begin{proof} 
     Since $\delta$ is triangular of rank $2$, by (\ref{slice}), we have $\delta(X)=X$. Suppose $\alpha \in k[X]$, then  $\alpha \delta$ is also triangular with respect to $\{X,Y,Z\}\in \Gamma(B)$. Now, suppose $\alpha \delta$  is triangular with respect to $\{ X',Y',Z' \}$.  Since $\Rk(\alpha \delta)=\Rk(\delta)=2$, by (\ref{slice}),  $\alpha \delta(X')=X'$ and hence $\delta(X')=X'$. Also, we have $\delta(X)=X$. Thus, $\{X,Y,Z\}, \{ X',Y',Z' \} \in \Gamma_{\delta}(B)$. By (\ref{k3}), $\delta$ is rigid and hence $k[X']=k[X]$. Since $\alpha \delta (Y')\in Y'+tK[X'][t]$ and $\alpha \delta(Y')\neq Y'$ as $\Rk(\delta)=\Rk(\alpha\delta)=2$, it follows that $\alpha \in k[X']$ and hence $\alpha \in K[X]$.
\end{proof}

The following result is proved in \cite[Corollary $3.4$]{DD} when $k$ is of zero characteristic. 
\begin{proposition}\label{tri2}
        Let $B=k^{[3]}$ and $\delta \in \Exp (B)$. Let $\{X,Y,Z\}\in \Gamma_{\delta}(B)$. Then $\delta$ is triangular if and only if  $\delta$ is triangular over $k[X]$. 
\end{proposition}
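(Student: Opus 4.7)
The plan is to argue by cases on $r := \Rk(\delta)$, with the substantive case being $r = 2$. The degenerate cases are quick: if $\delta$ is trivial, both sides hold vacuously; if $r = 3$, then no variable in any coordinate system of $B$ can lie in $B^{\delta}$, so $X \notin B^{\delta}$ and $\delta$ is not even an exponential map over $k[X]$, while (\ref{slice})(2) rules out triangularity over $k$, so both sides fail. For $r = 1$, the hypothesis $\{X,Y,Z\} \in \Gamma_{\delta}(B)$ combined with (\ref{rank1}) yields $B^{\delta} = k[X,Y]$ and $\delta(Z) - Z \in tk[X,Y][t]$, and applying (\ref{basic}) to $\delta$ viewed as an exponential map over $k[X,Y]$ shows $Z$ is a local slice. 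Then $\{X,Y,Z\}$ witnesses triangularity over $k$ and $\{Y,Z\}$ witnesses triangularity over $k[X]$.

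The main case is $r = 2$. The ``if'' direction is essentially bookkeeping: given a coordinate system $\{U,V\}$ of $B$ over $k[X]$ satisfying the triangularity conditions, the system $\{X,U,V\}$ is a coordinate system of $B$ over $k$, since $B = k[X][U,V] = k[X,U,V]$ with $X,U,V$ algebraically independent over $k$. Because $\delta(X) = X$, the data $\delta(U) - U \in tk[X][t]$ and $\delta(V) - V \in tk[X][U][t] = tk[X,U][t]$ transfer verbatim, and the local slice condition carries over, yielding triangularity over $k$.

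For the ``only if'' direction, suppose $\delta$ is triangular over $k$ with respect to some $\{X', Y', Z'\} \in \Gamma(B)$. Since $\Rk(\delta) = 2$, (\ref{slice})(2) forces $\delta(X') = X'$, so $k[X'] \subset B^{\delta}$ and $\{X', Y', Z'\} \in \Gamma_{\delta}(B)$. Now invoke Theorem \ref{k3}: $\delta$ is rigid, so $k[X] = k[X']$. This is the crucial move, as the identities $k[X'] = k[X]$ and $k[X',Y'] = k[X,Y']$ allow us to rewrite the triangulating relations $\delta(Y') - Y' \in tk[X'][t]$ and $\delta(Z') - Z' \in tk[X',Y'][t]$ as the defining conditions for triangularity over $k[X]$ in the coordinate system $\{Y',Z'\}$ of $B$ over $k[X]$. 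The local slice condition on $Y'$ (or $Z'$, depending on which is the smallest index with $\delta(Y_i) \neq Y_i$) transfers unchanged.

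The only non-formal input is rigidity (Theorem \ref{k3}), and that is precisely where the obstacle sits: a priori the coordinate $X$ handed to us has no relation to the first coordinate $X'$ of a triangulating system, and without rigidity there is no way to connect them. Once $k[X] = k[X']$ is secured, everything reduces to rewriting the containments in the definition of triangularity, and the fact that local slices are a base-ring-free notion.
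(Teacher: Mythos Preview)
Your proof is correct and follows essentially the same route as the paper: in the substantive rank~$2$ case you invoke (\ref{slice})(2) to obtain $\delta(X')=X'$, then rigidity (Theorem~\ref{k3}) to conclude $k[X]=k[X']$, and finally transfer the triangularity data to the coordinate system $\{Y',Z'\}$ over $k[X]$. The paper dispatches the rank~$1$ case in one line via (\ref{rank1}) and omits the trivial and rank~$3$ cases; your more explicit treatment of those is fine but not a genuinely different approach.
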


\begin{proof}
  $\impliedby$ is clear. Let $\delta$ be triangular with respect to $\{ X',Y',Z' \}\in \Gamma(B)$. If $\Rk(\delta)=1$, the result follows by (\ref{rank1}). Let $\Rk (\delta)=2$. By (\ref{slice}), $\delta(X')=X'$.  By (\ref{k3}), $\delta$ is rigid and we have $\{ X',Y',Z' \}\in \Gamma_{\delta}(B)$ which gives $k[X]=k[X']$ and it follows that $\delta$ is trianglular over $k[X]$ with the coordinate system $\{Y',Z'\}$ over $k[X]$.
\end{proof}

The following two results are proved in \cite[Proposition $4.2$ and $4.3$]{KL} when $k$ is of zero characteristic. 
\begin{proposition}
    
        Let $A$ be a Noetherian seminormal domain and $B=A^{[3]}$. Let $\delta \in \Exp _A(B)$ such that $\Rk (\delta)=1$. Let $\{X,Y,Z\}\in \Gamma(B)$ such that $X\in B^{\delta}$. Then $\delta $ is triangular over $A[X]$. 
\end{proposition}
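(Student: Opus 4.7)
The plan is to reduce the triangularity assertion to the statement ``$X$ is a variable of $B^\delta$ over $A$,'' and then verify the latter by the residual variable criterion~$(\ref{residual})$.

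First, I would apply Lemma~$(\ref{rank1})$: since $\Rk(\delta)=1$, there exists $\{U,V,W\}\in\Gamma_\delta(B)$ with $B^\delta=A[U,V]=A^{[2]}$ and $\delta(W)-W\in tA[U,V][t]$; moreover $W$ is a local slice of $\delta$ by Lemma~$(\ref{basic})$ applied over $A[U,V]$. Suppose for the moment that $A[U,V]=A[X][Y']$ for some $Y'\in A[U,V]$. Then $B=A[U,V][W]=A[X][Y',W]$, so $\{Y',W\}$ is a coordinate system of $B$ over $A[X]$; since $Y'\in B^\delta$ gives $\delta(Y')=Y'$, and $\delta(W)-W\in tA[U,V][t]=tA[X][Y'][t]$, this exhibits $\delta$ as triangular over $A[X]$ with local slice $W$. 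Thus the proof reduces to establishing that $X$ is a variable of $A[U,V]$ over $A$.

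For this I would invoke~$(\ref{residual})$. Since $A$ is Noetherian with $A_{\mathrm{red}}=A$ seminormal, it suffices to verify that for every $\mf{p}\in\s A$, $\o{X}$ is a variable of $\kappa(\mf{p})\tens{A}A[U,V]=\kappa(\mf{p})[\o{U},\o{V}]=\kappa(\mf{p})^{[2]}$. Fix $\mf{p}$ and set $L=\kappa(\mf{p})$. The coordinate systems $\{X,Y,Z\}$ and $\{U,V,W\}$ of $B$ over $A$ reduce to coordinate systems $\{\o{X},\o{Y},\o{Z}\}$ and $\{\o{U},\o{V},\o{W}\}$ of $L\tens{A}B=L^{[3]}$, with $\o{X}\in L[\o{U},\o{V}]$. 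Localizing at $L[\o{X}]\setminus\{0\}$ turns the ambient $L^{[3]}$ into $L(\o{X})^{[2]}$ (as $\o{X}$ is a variable of $L^{[3]}$), and the subring $L[\o{U},\o{V}]$ into $R':=L(\o{X})\tens{L[\o{X}]}L[\o{U},\o{V}]$. Since $L^{[3]}=L[\o{U},\o{V}][\o{W}]$, we have $L(\o{X})^{[2]}=R'[\o{W}]$, so $R'$ is factorially closed in $L(\o{X})^{[2]}$ (coefficient ring of a polynomial extension), contains $L(\o{X})$, and has $\Tr_{L(\o{X})}R'=1$. Proposition~$(\ref{tr1})$ applied with the HCF-ring $L(\o{X})$ forces $R'=L(\o{X})^{[1]}$, i.e.\ $\o{X}$ is generically a line in $L[\o{U},\o{V}]$. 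Theorem~$(\ref{generic})$ then concludes that $\o{X}$ is a variable of $L[\o{U},\o{V}]$.

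The main obstacle is the residue-field step. A tempting approach would be to push $\delta$ down to an exponential map $\tilde\delta$ on $L^{[3]}$ and argue via~$(\ref{rank1})$ applied to $\tilde\delta$, but $\tilde\delta$ can be trivial at some primes (precisely when all iterative derivatives $D_i$, $i\geq 1$, take values in $\mf{p}B$), which would invalidate that route. The resolution is that one does not need any exp map structure on $L^{[3]}$: $L[\o{U},\o{V}]$ is factorially closed in $L^{[3]}$ purely because $\o{W}$ is a variable over it, and this is enough to invoke~$(\ref{tr1})$ and~$(\ref{generic})$ using only the coordinate-system data that survives the reduction modulo $\mf{p}$.
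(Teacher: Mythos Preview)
Your argument is correct, and the global strategy matches the paper's: both reduce the claim to ``$X$ is a variable of $B^\delta=A[U,V]$ over $A$'' and verify this via the residual variable criterion~(\ref{residual}). The difference lies in how the residue-field condition is checked. The paper reduces $\delta$ modulo $\mf{p}$ to obtain an exponential map $\o{\delta}$ on $L[\o X,\o Y,\o Z]$, asserts that its ring of invariants is $L[\o U,\o V]$, and applies~(\ref{tr1}) directly over the HCF-ring $L[\o X]$ to conclude $L[\o U,\o V]=(L[\o X])^{[1]}$ in one step; in particular it never invokes~(\ref{generic}). Your route instead discards $\o\delta$ entirely, passes to the field $L(\o X)$, uses only the surviving coordinate data $L(\o X)^{[2]}=R'[\o W]$ to see that $R'=L(\o X)[\o U,\o V]$ is factorially closed of transcendence degree~$1$, applies~(\ref{tr1}) over $L(\o X)$, and then upgrades ``generically a line'' to ``variable'' via~(\ref{generic}).

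Your caution about $\o\delta$ is well placed: the paper's application of~(\ref{tr1}) needs $\o\delta$ non-trivial (for both factorial closedness and $\Tr=1$), and the identification $(L^{[3]})^{\o\delta}=L[\o U,\o V]$ also fails when $\o\delta$ is trivial, which genuinely can happen (e.g.\ $A=k[s]$, $\delta(W)=W+st$, $\mf p=(s)$). Your argument sidesteps this by using only that $\o W$ remains a variable over $L[\o U,\o V]$ after reduction, at the cost of one extra localization and an appeal to~(\ref{generic}). The paper's approach is shorter when $\o\delta$ happens to be non-trivial; yours is uniform in $\mf p$.
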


\begin{proof}
    Since $\Rk (\delta)=1$, there exists $\{ X',Y',Z' \}\in \Gamma(B)$ such that $X',Y' \in B^{\delta}$. By (\ref{rank1}), we have $B^{\delta}=A[X',Y']$ and $\delta(Z')\in Z'+tA[X',Y'][t]$. We claim that $X$ is a variable of $A[X', Y']$ over $A$. By $(\ref{residual})$, it is enough to show that $X$ is a residual variable of $A[X', Y']$. Let $\mf{p}$ be a prime ideal of $A$. By $(3)$ of (\ref{first principles}), $\delta$ extends to an exponential map on $A_{\mf{p}}[X,Y,Z]$ with ring of invariants $A_{\mf{p}}[X',Y']$ and going modulo $\mf{p}A_{\mf{p}}$, we have an exponential map $\o{\delta}$ on $\kappa(\mf{p})[\o{X},\o{Y},\o{Z}]=A_{\mf{p}}/\mf{p}A_{\mf{p}}[\o{X},\o{Y},\o{Z}]$ with ring of invariants $(\kappa(\mf{p})[\o{X},\o{Y},\o{Z}])^{\o{\delta}}= \kappa(\mf{p})[\o{X'},\o{Y'}]$. 
    
    Since $X\in B^{\delta}=A[X',Y']$, we have that $\kappa(\mf{p})[\o{X}] \subset (\kappa(\mf{p})[\o{X},\o{Y},\o{Z}])^{\o{\delta}}\subset \kappa(\mf{p})[\o{X},\o{Y},\o{Z}]$ and by (\ref{tr1}), we get $(\kappa(\mf{p})[\o{X},\o{Y},\o{Z}])^{\o{\delta}}= (\kappa(\mf{p})[\o{X}])^{[1]}$. Thus, $\kappa(\mf{p})[\o{X'},\o{Y'}]= (\kappa(\mf{p})[\o{X}])^{[1]}$ and hence $X$ is a residual variable of $A[X',Y']$ and thus $X$ is a variable of $A[X',Y']$ over $A$. We have that $A[X',Y']=A[X,P]$ for some $P\in A[X',Y']$ and hence $B=A[X,P,Z']$. We note that $\delta(Z') \in Z'+tA[X',Y'][t]=Z'+tA[X,P][t]$ and $\delta(P)=P$. We have that $B=(A[X])^{[2]}$ and there exists a coordinate system $\{P,Z'\}$ over $A[X]$ such that $\delta(P)=P \in P+A[X][t]$ and $\delta(Z')\in Z'+tA[X,P][t]$ which shows that $\delta$ is triangular over $A[X]$.
\end{proof}

\begin{proposition}\label{ATri}
    
    Let $B=A^{[3]}$ and $\delta \in \Exp _A(B)$. Let $\{X,Y,Z\} \in \Gamma(B)$ such that $X\in B^{\delta}$. Suppose $\Rk (\delta)=\Rk (\delta_K)=2$, where $K=\f (A)$ and $\delta_K\in \Exp (K^{[n]})$ is defined as in $(3)$ of (\ref{first principles}) Then $\delta$ is triangular over $A$ if and only if $\delta$ is triangular over $A[X]$. 
\end{proposition}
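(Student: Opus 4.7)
The plan is to handle the ``$\Leftarrow$'' direction by immediate observation and focus effort on the ``$\Rightarrow$'' direction. For the easy direction, if $\delta$ is triangular over $A[X]$ with respect to some coordinate system $\{P,Q\}$ of $B$ over $A[X]$, then $\{X,P,Q\}\in \Gamma(B)$ exhibits triangularity of $\delta$ over $A$: condition (a) holds automatically because $X\in B^{\delta}$, while (b) and (c) are inherited from the $A[X]$-triangularity.

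For the non-trivial direction, I would start from a triangular expression of $\delta$ over $A$ with respect to some $\{X',Y',Z'\}\in \Gamma(B)$, and first show that $X'\in B^{\delta}$. The natural move is to extend scalars to $K$: by (3) of (\ref{first principles}), $\delta_K$ is an exponential map on $K^{[3]}=K[X',Y',Z']$, and it remains triangular over $K$ with respect to the same coordinate system. Since $\Rk(\delta_K)=2$ by hypothesis, (\ref{slice})(2) applied to $\delta_K$ over $K^{[3]}$ (which is a polynomial ring over a field, so the corollary applies verbatim) forces $\delta_K(X')=X'$, and hence $\delta(X')=X'$.

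With both $X$ and $X'$ now in $B^{\delta}$ and $\Rk(\delta)=2$, both $\{X,Y,Z\}$ and $\{X',Y',Z'\}$ lie in $\Gamma_{\delta}(B)$. The rigidity theorem (\ref{use}), which applies precisely because $\Rk(\delta)=\Rk(\delta_K)$, then gives $A[X]=A[X']$. Consequently $B=A[X',Y',Z']=A[X][Y',Z']=(A[X])^{[2]}$, so $\{Y',Z'\}$ is a coordinate system of $B$ over $A[X]$.

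It remains to verify the three triangularity conditions for $\delta$ over $A[X]$ with respect to $\{Y',Z'\}$. The relations $\delta(Y')-Y'\in tA[X'][t]=tA[X][t]$ and $\delta(Z')-Z'\in tA[X',Y'][t]=tA[X][Y'][t]$ transfer directly, giving (a) and (b). For (c), note that $\delta(Y')\neq Y'$, for otherwise $X',Y'\in B^{\delta}$ would force $\Rk(\delta)\leq 1$; hence $Y'$ is the first non-invariant coordinate and was already the local slice of $\delta$ in the original triangular expression over $A$. The main obstacle I anticipate is purely the bookkeeping at the scalar-extension step: one must be confident that (\ref{slice})(2), stated for $k^{[3]}$ with $k$ an arbitrary field, genuinely applies over $K=\f(A)$, and that rigidity (\ref{use}) and the coordinate rewriting $A[X]=A[X']$ interact cleanly to place $Y'$ in the correct slot of the triangular chain.
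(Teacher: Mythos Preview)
Your proposal is correct and follows essentially the same route as the paper's proof: both argue that $\delta(X')=X'$ by passing to $\delta_K$ over the field $K$, then invoke rigidity (\ref{use}) to conclude $A[X]=A[X']$ and read off triangularity over $A[X]$ via $\{Y',Z'\}$. The only cosmetic difference is that the paper establishes $\delta(X')=X'$ by observing that otherwise $X'$ would be a \emph{slice} of $\delta_K$ (its top $t$-coefficient lies in $A\setminus\{0\}\subset K^{*}$) and then applying (\ref{slice})(1) to get $\Rk(\delta_K)=1$, whereas you invoke (\ref{slice})(2) directly on the triangular $\delta_K$; since (\ref{slice})(2) is itself proved from (\ref{slice})(1), the two arguments are the same in substance.
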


\begin{proof}
    $\impliedby $ is clear. Suppose $\delta$ is triangular over $A$. Then there exists $\{ X',Y',Z' \} \in \Gamma(B)$ such that $\delta(X')\in X'+tA[t], \delta(Y')\in Y'+tA[X'][t]$ and $\delta(Z') \in Z'+ tA[X',Y'][t]$. We claim that $\delta(X')=X'$. If $\delta(X')=X'+f(t)$, where $f\in A[t]$ and $f\neq 0$. Then $X'$ is a local slice of $\delta$ and it follows that $X'$ is a slice of $\delta_K$ and by (\ref{slice}), we have $\Rk (\delta_K)=1$ which is a contradiction. Thus, $\delta(X')=X'$. By (\ref{use}) $\delta$ is rigid of rank $2$ and $\{X,Y,Z\} \in \Gamma_{\delta}(B)$ and $\{ X',Y',Z' \}\in  \Gamma_{\delta}(B)$ gives that $A[X']=A[X]$. By considering the coordinate system $\{Y', Z'\}$ over $A[X]$, it follows that $\delta$ is triangular over $A[X]$.
\end{proof}

\noindent
{\bf Acknowledgement:} The author thanks Manoj K. Keshari for reviewing earlier drafts and suggesting improvements. The author is supported by the Prime Minister's Research Fellowship (ID1301165).

\bibliographystyle{abbrv}
\bibliography{refs} 
\end{document}